\documentclass[12pt]{amsart}   
\linespread{1.1}              
\usepackage{amsmath,amsthm,amssymb,mathtools,marvosym}
\usepackage[english]{babel}
\usepackage[autostyle]{csquotes}
\textwidth=435pt \evensidemargin8pt \oddsidemargin=8pt
\marginparsep=8pt \marginparpush=8pt \textheight=670pt         
\topmargin=-20pt

\newtheorem{theorem}{Theorem}[section]
\newtheorem{Lemma}[theorem]{Lemma}
\newtheorem{Proposition}[theorem]{Proposition}
\newtheorem{Corollary}[theorem]{Corollary}

\newtheorem{Example}[theorem]{Example}
\newtheorem{Remark}[theorem]{Remark}

\begin{document} 
\title[Pointwise dynamics under Orbital Convergence]{Pointwise dynamics under Orbital Convergence}   
\author[Abdul Gaffar Khan, Pramod Kumar Das, Tarun Das]{Abdul Gaffar Khan$^{1}$, Pramod Kumar Das$^{2}$ and Tarun Das$^{1}$}                 
\subjclass[2010]{Primary 54H20 ; Secondary 40A30}
\keywords{Expansivity, Shadowing, Transitivity, Topological Stability, Chaos \vspace*{0.08cm}\\ 
\vspace*{0.01cm}
\Letter{Tarun Das} \\
\vspace*{0.08cm}
tarukd@gmail.com \\
\vspace*{0.01cm}
Abdul Gaffar Khan \\
\vspace*{0.08cm}
gaffarkhan18@gmail.com\\
\vspace*{0.01cm}
Pramod Kumar Das\\
\vspace*{0.08cm}
pramodkumar.das@nmims.edu\\
\vspace*{0.01cm}
\textit{$^{1}$Department of Mathematics, Faculty of Mathematical Sciences, University of Delhi, Delhi, India.}\\ 
\hspace*{0.11cm}\textit{$^{2}$School of Mathematical Sciences, Narsee Monjee Institute of Management Studies, Vile Parle, Mumbai-400056,  India.} 
}

\begin{abstract}
We obtain sufficient conditions under which the limit of a sequence of functions exhibits a particular dynamical behaviour at a point like expansivity, shadowing, mixing, sensitivity and transitivity. We provide examples to show that the set of all expansive, positively expansive and sensitive points are neither open nor closed in general. We also observe that the set of all transitive and mixing points are closed but not open in general.  We give examples to show that properties like expansivity, sensitivity, shadowing, transitivity and mixing at a point need not be preserved under uniform convergence and properties like topological stability and $\alpha$-persistence at a point need not be preserved under pointwise convergence. 
\end{abstract} 
\maketitle 

\section{Introduction}

The idea of studying the behaviour of a dynamical system from pointwise viewpoint was initiated by Reddy. In the process of answering a question posed by Gottschalk to him, he introduced and studied pointwise expansivity, a strictly weaker notion than expansivity \cite{RP}. 
The power and the beauty of pointwise dynamics got highlighted in the recent works including \cite{AO, MSP, XZ}.
In \cite{AO}, Akin introduced the concept of chain continuity at a point which is a stronger version of shadowable point introduced in \cite{MSP} by Morales and proved that every chain transitive continuous map with chain continuity at a point must be equicontinuous \cite[Corollary 2.3]{AO} which is interestingly not true for chain transitive systems with shadowable points. A decade later, authors have introduced \cite{XZ} the concept of entropy point which worked as a key ingredient in the proof of \cite[Theorem 3]{MI}. In this theorem, Moothatu has proved that certain kind of continuous map with shadowing property has positive entropy. Recently, Morales has proved that unlike expansivity, a homeomorphism on a compact metric space has shadowing if and only if each point is shadowable \cite{MSP}. In \cite{KPSP}, the notion of entropy point is used by Kawaguchi to show that the existence of certain kind of e-shadowable points implies positive entropy \cite{KQS}. In \cite{DKDM}, authors have studied the relation of specification points with Devaney chaotic points and positive entropy of the system.  In the same paper, authors have provided an example of pointwise measure expansive homeomorphism which is not measure expansive. They have also proved that mixing at a shadowable point is not sufficient for it to be a specification point, but mixing of the map forces a shadowable point to be a specification point. Koo et. al. have recently studied the connection of shadowable points with topologically stable points and $\alpha$-persistent points in \cite{KLMP}. 
\medskip

Study of a dynamical system deals with the behaviour of an individual orbit but it is not always possible to track down the real behaviour of each orbit and here, the role of predicting the nature of an orbit via approximating it by a sequence of points (pseudo-orbits) or functions comes into picture. Also under natural constraints, the mathematical modelling of a system induces a discrete or continuous system as an approximation of the original system. Thus, a natural question arises is to study  the degree of closeness of dynamical behaviour of approximated system and the original system. Such questions have also been considered in \cite{AAU}, where author has proved that a uniform limit of transitive maps is transitive \cite[Theorem 3.1]{AAU}. Unfortunately, in an erratum \cite{AMPE}, authors gave a counter example to disprove this result. Also \cite[Example 5]{YZZD} disproved \cite[Theorem 3.2]{AAU}. Various sufficient conditions for transitivity, mixing, sensitivity etc. of limit of a sequence of functions have been studied in \cite{FDA, LA, SU, YZZD}. 
\medskip

This paper is distributed as follows. Section 2 contains preliminaries required for the remaining. In Section 3, we provide sufficient conditions under which the limit of a sequence of functions exhibits particular dynamical behaviour at a point like expansivity, $\mu$-expansivity, transitivity, mixing, Devaney chaos, shadowing, specification, topological stability and $\alpha$-persistence. 
In Section 4, we discuss topological nature of the set of all points with particular dynamical property like expansivity, sensitivity, transitivity and mixing. We provide examples to show that properties like expansivity, sensitivity, shadowing, transitivity and mixing at a point need not be preserved under uniform convergence and properties like topological stability and $\alpha$-persistence at a point need not be preserved under pointwise convergence. 

\section{Preliminaries}
Throughout this paper, $(X,d)$ denotes a metric space equipped with the metric $d$. 
We say that $X$ is Mandelkern locally compact metric space if every bounded subset of $X$ is contained in a compact set, which is equivalent to saying that every closed ball of finite radius in $X$ is compact. It is easy to check that, such spaces are complete.  We set $B(x, \epsilon) = \lbrace y\in X : d(x, y) < \epsilon\rbrace$, $B^{-}(x, \epsilon) = B(x, \epsilon)\setminus \lbrace x \rbrace$ and $B[x, \epsilon] = \lbrace y\in X : d(x, y) \leq \epsilon\rbrace$. 
\medskip

We shall consider the bounded metric on $X$ defined by $\overline{d}(x,y)= \min\lbrace d(x,y),1\rbrace$ and the metric on the set of all continuous self maps of $X$ defined by $D(f,g) = \sup_{x\in X}\overline{d}(f(x),g(x))$. We say that $f$ is a uniform equivalence, if both $f$ and $f^{-1}$ are uniformly continuous. The set of all uniformly continuous maps and the set of all uniform equivalences on $X$ are denoted by $UC(X)$ and $UE(X)$ respectively.   
\medskip

Let $f,f_n\in UC(X)$ for each $n\in\mathbb{N}^+$. Then, we recall that
\\
(i) $f_{n}$ is pointwise convergent to $f$ or $f_{n}\xrightarrow[]{\text{pc}} f$, if for each $x\in X$ and each $\epsilon > 0$ there exists an $N(x, \epsilon) = N\in \mathbb{N}^{+}$ such that $d(f_{n}(x), f(x)) < \epsilon$, for all $n\geq N$.
\\
(ii) $f_{n}$ is uniformly convergent to $f$ or $f_{n}\xrightarrow[]{\text{uc}} f$, if for every $\epsilon > 0$ there exists an $N(\epsilon) = N\in \mathbb{N}^{+}$ such that $d(f_{n}(x), f(x)) < \epsilon$, for all $n\geq N$ and for all $x\in X$.
\\
(iii) $f_{n}$ is orbitally convergent to $f$ or $f_{n}\xrightarrow[]{\text{oc}} f$, if for every $\epsilon > 0$ there exists an $N(\epsilon) = N\in \mathbb{N}^{+}$ such that $d(f_{n}^{k}(x), f^{k}(x)) < \epsilon$, for all $n\geq N$, for each $x\in X$ and for each $k\in \mathbb{N}^{+}$ \cite[Remark 5]{FDA}.
\medskip

A point $x\in X$ is said to be an expansive (positively expansive) point of $f\in UE(X)$ ($f\in UC(X)$) if there is a $\delta^{f}_{x}>0$ such that for every element $y\in X$ distinct from $x$, we have $d(f^n(x),f^n(y))>\delta^{f}_{x}$ for some $n\in\mathbb{Z}$ ($n\in \mathbb{N}$) \cite{RP}. The set of all expansive (positively expansive) points of $f$ is denoted by $E(f)$ ($E^{+}(f)$).  
\medskip
 
A map $f\in UE(X)$ is said to be expansive if for every pair of distinct points $x, y\in X$ there exists a constant $\delta > 0$ such that $d(f^{n}(x), f^{n}(y)) > \delta$, for some $n\in \mathbb{Z}$ \cite{UU}. We set $E_{x}(f, y, \epsilon) = \lbrace n\in \mathbb{Z} : d(f^{n}(x), f^{n}(y)) > \epsilon \rbrace$ and $E^{+}_{x}(f, y, \epsilon) = \lbrace n\in \mathbb{N} : d(f^{n}(x), f^{n}(y)) > \epsilon \rbrace$.
\medskip 

A point $x\in X$ is called an atom for a measure $\mu$ if $\mu(\lbrace x\rbrace)>0$. A measure $\mu$ on $X$ is said to be non-atomic if it has no atom. 
We call $X$ to be non-atomic if there exists a non-atomic Borel measure on it. 
Every Borel measure is assumed to be non-trivial i.e. $\mu(X) > 0$. Let $f\in UE(X)$ ($f\in UC(X)$). Then, a Borel measure $\mu$ on $X$ is said to be pointwise (positively pointwise) expansive for $f$ at $x\in X$, if there exists a $\delta_x>0$ such that $\mu(\Gamma^{f}_{\delta_x}(x))=0$ ($\mu(\Phi^{f}_{\delta_x}(x))=0$), where $\Gamma^{f}_{\delta_{x}}(x)=\lbrace y\in X\mid d(f^n(x),f^n(y))\leq \delta_{x}$, for each $n\in\mathbb{Z}\rbrace$ and $\Phi^{f}_{\delta_x}(x)=\lbrace y\in X\mid d(f^n(x),f^n(y))\leq \delta_x$, for each $n\in\mathbb{N}\rbrace$. The set of all points at which $\mu$ is pointwise (positively pointwise) expansive for $f$ is denoted by $E(f, \mu)$ ($E^{+}(f, \mu)$). If $X$ is non-atomic, then $f\in UE(X)$ ($f\in UC(X)$) is said to be pointwise (positively pointwise) measure expansive at $x\in X$ if there is a $\delta_x>0$ such that $\mu(\Gamma^{f}_{\delta_x}(x))=0$ ($\mu(\Phi^{f}_{\delta_x}(x))=0$) for any non-atomic Borel measure $\mu$. The set of all points at which $f$ is pointwise (positively pointwise) measure expansive is denoted by $EM(f)$ ($EM^{+}(f)$). A map $f\in UE(X)$ is said to be strongly pointwise measure expansive at $x\in X$ if there is a $\delta_x>0$ such that $\mu(\Gamma^{f}_{\delta_x}(x))=\mu(\lbrace x\rbrace)$ for any Borel measure $\mu$ on $X$. The set of all points at which $f$ is strongly pointwise (strongly positively pointwise) measure expansive is denoted by $ES(f)$ ($ES^{+}(f)$).
\medskip

A point $x\in X$ is said to be a sensitive point of $f\in UC(X)$ if there exists a $\delta^{f}_{x} > 0$ such that for every open set $U$ containing $x$, there exists a $y\in U$ satisfying $d(f^{n}(x), f^{n}(y))>\delta^{f}_{x}$, for some $n\in \mathbb{N}^{+}$. The set of all sensitive points of $f$ is denoted by $Se(f)$. 
We set $Se_{x}(f, \epsilon, \delta) = \lbrace (y, n)\in B(x, \epsilon)\times \mathbb{N}^{+} : d(f^{n}(x), f^{n}(y)) > \delta \rbrace$.
\medskip

A map $f\in UC(X)$ on $X$ is said to have a dense set of periodic points at $x\in X$, if every deleted open neighbourhood of $x$ contains a periodic point of $f$. The set of all such points with respect to $f$ is denoted by $P(f)$. We set $P(f, x, \epsilon) = \lbrace n\in \mathbb{N}^{+} : d(f^{n}(x), x) < \epsilon \rbrace$.
\medskip
 
For $f\in UC(X)$, any open set $U$ containing $x\in X$ and any non-empty open set $V$, we set $N(f, x, U, V) = \lbrace n\in \mathbb{N}^{+} : f^{n}(U)\cap V \neq \phi\rbrace$. A point $x\in X$ is said to be a topologically transitive point of $f$, if for any open set $U$ containing $x$ and any non-empty open set $V$, $N(f, x, U, V)\neq \phi$. A point $x\in X$ is said to be a topologically mixing point of $f$, if for any open set $U$ containing $x$ and any non-empty open set $V$, $N(f, x, U, V)$ is cofinite. The set of all topologically transitive (topologically mixing) points of $f$ is denoted by $Tt(f)$ ($Tm(f)$ respectively). We set $Tt(f, x, U, V) = \lbrace (y, n)\in U\times \mathbb{N}^{+} : f^{n}(y)\in V\rbrace$.
\medskip

A point $x\in X$ is said to be a Devaney chaotic point of $f\in UC(X)$, if $x\in Tt(f)\cap P(f)
\cap Se(f)$ \cite{DKDM}. The set of all Devaney chaotic points of $f$ is denoted by $Dc(f)$.

A sequence $\rho = \lbrace x_{i}\rbrace_{i\in \mathbb{Z}}$ ($\rho^{+} = \lbrace x_{i}\rbrace_{i\in \mathbb{N}}$) is said to be through a subset $B$ of $X$ if $x_{0}\in B$. We say that $\rho$ ($\rho^{+}$) is a $\delta_{x}^{f}$-pseudo orbit for $f\in UE(X)$ ($f\in UC(X)$) through $x$ if $d(f(x_{n}), x_{n+1}) < \delta_{x}^{f}$, for each $n\in \mathbb{Z}$ ($n\in \mathbb{N}$). We say that $\rho$ ($\rho^{+}$) is $\epsilon$-traced by $y\in X$ through $f$, if $d(f^{n}(y), x_{n}) < \epsilon$, for each $n\in \mathbb{Z}$ ($n\in \mathbb{N}$). A point $x\in X$ is said to be a shadowable point (positive shadowable point) for $f\in UE(X)$ ($UC(X)$) if for every $\epsilon > 0$, there exists a $\delta_{x}^{f}(\epsilon) > 0$ such that every $\delta_{x}^{f}(\epsilon)$-pseudo orbit $\rho$ ($\rho^{+}$) through $x$ can be $\epsilon$-traced. The set of all shadowable (positive shadowable) points of $f$ is denoted by $Sh(f)$ ($Sh^{+}(f)$). A map $f\in UE(X)$ ($UC(X)$) has shadowing (positive shadowing) if for every $\epsilon > 0$, there exists a $\delta^{f}(\epsilon) > 0$ such that every $\delta^{f}(\epsilon)$-pseudo orbit $\rho$ ($\rho^{+}$) through $X$ can be $\epsilon$-traced by a point in $X$ \cite{MSP}. We set $Sh(f, x, \epsilon) = \lbrace \delta > 0 :$ every $\delta$-pseudo orbit through $x$ for $f$ is $\epsilon$-traced through $f$ by a point in $X\rbrace$ and $Sh^{+}(f_{n}, x, \epsilon) = \lbrace \delta > 0 :$ every $\delta$-pseudo orbit $\lbrace x_{i}\rbrace_{i\in \mathbb{N}}$ through $x$ for $f$ is $\epsilon$-traced through $f$ by a point in $X\rbrace$. 
\medskip

A point $x\in X$ is said to be a specification point of $f\in UC(X)$ if for every $\epsilon > 0$, there exists a positive integer $M^{f}_{x}(\epsilon)$ such that for any finite sequence $x = x_{1}, x_{2}, ..., x_{k}$ in $X$ and any set of integers $0\leq a_{1}\leq b_{1} < a_{2}\leq b_{2} < . . .< a_{k}\leq b_{k}$ with $a_{j} - b_{j-1} \geq M^{f}_{x}(\epsilon)$, for all $1\leq j \leq k$, there exists a $y\in X$  such that $d(f^{i}(y), f^{i}(x_{j})) < \epsilon$, for all $a_{j}\leq i\leq b_{j}$ and for all $1\leq j\leq k$ \cite{DKDM}. The set of all specification points of $f$ is denoted by $Sp(f)$. A map $f$ has specification if choice of $M^{f}_{x}(\epsilon)$ depends only on $\epsilon$. We set $Sp(g, x, \epsilon) = \lbrace M\in \mathbb{N}^{+} : M$ corresponds to $\epsilon$ in the definition of specification point $x\rbrace$.  

\begin{theorem}\cite[Lemma 3.1]{AAU}
Let $(X, d)$ be a compact metric space, and suppose that the sequence of continuous functions $f_{n} : X\rightarrow X$, for each $n\in \mathbb{N}^{+}$, converges uniformly to $f: X\rightarrow X$. Then, for given $\epsilon > 0$ and a positive integer $k$ there exists a positive integer $n_{0}$ (possibly depending on $k$) such that for all $n > n_{0}$, $d(f_{n}^{k}(x), f^{k}(x)) < \epsilon$, for each $x \in X$.
\label{T2.2}
\end{theorem}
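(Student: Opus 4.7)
The plan is to prove the statement by induction on the iterate count $k$, exploiting the uniform continuity of the limit map $f$ on the compact space $X$.

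For the base case $k=1$, the conclusion is exactly the definition of $f_n\xrightarrow[]{\text{uc}}f$, so nothing needs to be shown. For the inductive step, assume the result holds for some $k\geq 1$, and let $\epsilon>0$ be given. Since $X$ is compact and $f$ is continuous, $f$ is uniformly continuous; choose $\delta>0$ so that $d(a,b)<\delta$ implies $d(f(a),f(b))<\epsilon/2$ for all $a,b\in X$. By the inductive hypothesis applied with $\delta$ in place of $\epsilon$, there is $n_1$ such that $d(f_n^k(x),f^k(x))<\delta$ for all $n>n_1$ and all $x\in X$. By uniform convergence, there is $n_2$ such that $d(f_n(y),f(y))<\epsilon/2$ for all $n>n_2$ and all $y\in X$. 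Set $n_0=\max\{n_1,n_2\}$.

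For $n>n_0$ and $x\in X$, apply the triangle inequality with the intermediate point $f(f_n^k(x))$:
\[
d(f_n^{k+1}(x),f^{k+1}(x))\leq d\bigl(f_n(f_n^k(x)),f(f_n^k(x))\bigr)+d\bigl(f(f_n^k(x)),f(f^k(x))\bigr).
\]
The first summand is bounded by $\epsilon/2$ via the choice of $n_2$ applied at the point $y=f_n^k(x)$. The second summand is bounded by $\epsilon/2$ by the uniform continuity of $f$ together with the inductive estimate $d(f_n^k(x),f^k(x))<\delta$. Adding the two bounds yields $d(f_n^{k+1}(x),f^{k+1}(x))<\epsilon$ uniformly in $x$, completing the induction.

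There is no genuine obstacle here; the only subtle point to flag is that the threshold $n_0$ depends on $k$ because each inductive step tightens the required closeness from $\epsilon$ down to a $\delta$ determined by the modulus of uniform continuity of $f$, so the thresholds can grow with $k$. Compactness of $X$ is used exactly once, to promote continuity of $f$ to uniform continuity; this is the ingredient that prevents the argument from working in a general metric setting.
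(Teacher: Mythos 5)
Your proof is correct: the induction on $k$, with the triangle inequality through the intermediate point $f(f_n^k(x))$ and uniform continuity of $f$ supplied by compactness, is exactly the standard argument for this result. Note that the paper itself states this as an imported result (Lemma~3.1 of Abu-Saris and Al-Hami) and gives no proof, so there is nothing to compare against beyond observing that your argument is the canonical one; your closing remark that $n_0$ genuinely depends on $k$ correctly explains the parenthetical in the statement and why uniform convergence does not automatically upgrade to orbital convergence.
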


\begin{theorem}\cite[Theorem 4.8]{DKDM}
Let $f$ be a continuous map on an infinite metric space $X$. If $x\in X$ is a topologically transitive point such that $f$ has dense set of periodic points at $x$, then $x$ is a sensitive point of $f$.
\label{T2.1}
\end{theorem}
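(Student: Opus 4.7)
The plan is to adapt the classical Banks--Brooks--Cairns--Davis--Stacey argument for sensitivity (a transitive map with dense periodic points on an infinite metric space is sensitive) to the pointwise setting, localising every step so that all neighbourhoods and witnessing periodic points come from near the single point $x$.

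First, I would extract the sensitivity constant from two distinct periodic orbits that can be found close to $x$. Since every deleted neighbourhood of $x$ contains a periodic point and each individual periodic orbit is a finite set, I can choose a periodic point $p_{1}\neq x$ near $x$ with orbit $O_{1}$, and then inside a still smaller deleted neighbourhood of $x$ chosen to avoid the finite set $O_{1}$, choose a second periodic point $p_{2}$ whose orbit $O_{2}$ necessarily differs from $O_{1}$ (that $X$ is infinite ensures $x$ is a non-isolated accumulation point of periodic points, so such $p_{1}, p_{2}$ exist). These two orbits are disjoint and finite, so $8\delta_{0}:=d(O_{1},O_{2})>0$, and the triangle inequality forces $d(x,O_{q})\geq 4\delta_{0}$ for at least one $q\in\{p_{1},p_{2}\}$.

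Next I would verify that $\delta_{0}$ witnesses sensitivity at $x$. Let $U$ be any open neighbourhood of $x$; shrinking if necessary, assume $U\subseteq B(x,\delta_{0})$. Density of periodic points at $x$ gives a periodic point $p\in U$ of some period $n$, and continuity of $f$ makes
\[
V \;=\; \bigcap_{i=0}^{n-1} f^{-i}\bigl(B(f^{i}(q),\delta_{0})\bigr)
\]
a non-empty open neighbourhood of $q$ on which any point stays within $\delta_{0}$ of the first $n$ iterates of $q$. Topological transitivity at $x$ produces $k\in\mathbb{N}^{+}$ and $y\in U$ with $f^{k}(y)\in V$. Choosing $j$ with $k\leq jn < k+n$ so that $jn-k\in\{0,\ldots,n-1\}$ and using $f^{jn}(p)=p$, two triangle inequalities give
\[
d\bigl(f^{jn}(p),f^{jn}(y)\bigr)\;\geq\; d\bigl(p,f^{jn-k}(q)\bigr)-\delta_{0}\;\geq\; d(x,O_{q})-d(x,p)-\delta_{0}\;\geq\; 4\delta_{0}-\delta_{0}-\delta_{0}\;=\;2\delta_{0}.
\]
A final triangle inequality around $f^{jn}(x)$ forces either $d(f^{jn}(x),f^{jn}(p))\geq\delta_{0}$ or $d(f^{jn}(x),f^{jn}(y))\geq\delta_{0}$, and since both $p$ and $y$ lie in $U$, one of them witnesses sensitivity of $x$ at scale $\delta_{0}$.

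The only delicate step is the opening geometric setup, namely securing two distinct periodic orbits at positive mutual distance from the purely pointwise hypothesis of density of periodic points at $x$; this is where infiniteness of $X$ is used, since otherwise the only periodic orbit available could be the orbit of $x$ itself. Once that input is secured, the rest is standard bookkeeping: the periodicity of $p$ converts the transitive return time $k$ into a multiple $jn$ at which $f^{jn}(p)=p$ can be compared against a point forced by $V$ to stay near the orbit of the ``far'' periodic point $q$.
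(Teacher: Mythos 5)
The paper itself offers no proof of this statement---it is imported by citation from \cite{DKDM}---but your argument is correct and is exactly the expected one: the Banks--Brooks--Cairns--Davis--Stacey sensitivity proof localized at $x$, with the two reference periodic orbits, the constant $\delta_0=d(O_1,O_2)/8$, and the comparison of $f^{jn}(p)=p$ against a transitive visitor to the tube $V$ around the far orbit $O_q$ all carried out correctly. Only two cosmetic repairs are needed: if $x$ happens to lie on $O_1$ you must choose the second deleted neighbourhood to avoid $O_1\setminus\{x\}$ rather than $O_1$ (a deleted neighbourhood of $x$ cannot avoid $x$ itself), and since the definition of a sensitive point demands a strict inequality you should quote the sensitivity constant as $\delta_0/2$ rather than $\delta_0$; note also that the infiniteness of $X$ is in fact automatic here, since the deleted-neighbourhood form of density of periodic points at $x$ already forces infinitely many distinct periodic points accumulating at $x$, which is all your first step really uses.
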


\begin{theorem}\cite[Lemma 4.1]{CLS}
If $f: [0, 1] \rightarrow [0, 1]$ is continuous and has fixed points only at the ends of the interval, then $f$ has the shadowing property.
\label{T2.3}
\end{theorem}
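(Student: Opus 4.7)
My plan is to exploit the rigidity of the global dynamics that the fixed-point hypothesis imposes on $f$, and to construct the shadowing point via a phase-by-phase analysis of the pseudo orbit.

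First I would apply the intermediate value theorem to $g(x) = f(x)-x$. Since $g$ has no zeros on $(0,1)$ by hypothesis, $g$ is of constant sign there, and after conjugating by $x \mapsto 1-x$ if necessary we may assume $f(x) > x$ on $(0,1)$. In this canonical case the right endpoint $1$ must be fixed (otherwise the continuous function $g$ would take a nonpositive value at $1$, contradicting positivity on $(0,1)$ in the limit), every orbit in $(0,1)$ is strictly increasing and converges monotonically to $1$, and the sub-interval $[1-\eta, 1]$ is forward invariant for every $\eta \in (0,1)$. The crucial quantitative input is that for each $\eta \in (0, 1/2)$ the continuous positive function $g$ attains a strictly positive minimum $\gamma = \gamma(\eta)$ on the compact set $[\eta, 1-\eta]$; this forces every genuine orbit, and any $\delta$-pseudo orbit with $\delta < \gamma/2$, to cross $[\eta, 1-\eta]$ in at most $N = \lceil 2/\gamma\rceil$ iterates.

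Given $\epsilon > 0$ I would pick $\eta < \epsilon/4$, compute $N = N(\eta)$ as above, and use uniform continuity of the finite family $f, f^2, \ldots, f^N$ to choose $\delta \in (0, \gamma/2)$ so small that $|y-z| < \delta$ implies $|f^k(y) - f^k(z)| < \eta$ for each $k \le N$. Given a $\delta$-pseudo orbit $\{x_n\}_{n\ge 0}$ through $x_0$, I would decompose the index set into three phases: a \emph{lingering} phase $0 \le n \le n_1$ during which $x_n \in [0, \eta]$, a bounded \emph{transient} phase $n_1 < n \le n_2$ of length at most $N$ in which the pseudo orbit crosses the middle, and an \emph{absorbing} phase $n \ge n_2$ contained in $[1-\eta, 1]$. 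The shadowing point $y$ would then be constructed case by case: if $n_1 = 0$, take $y = x_0$; if $n_1 > 0$, use an intermediate value argument on a suitable sub-interval of $(0, \eta)$ to pick $y$ whose orbit lingers in $[0, 2\eta]$ for exactly $n_1$ steps and whose $n_1$-th iterate $f^{n_1}(y)$ lies within $\eta$ of $x_{n_1}$.

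The verification of $|f^n(y) - x_n| < \epsilon$ for all $n$ then splits phase by phase: on the lingering phase both terms lie in $[0, 2\eta] \subseteq [0, \epsilon/2]$; on the transient phase of length at most $N$ the triangle inequality chained with the uniform continuity estimate bounds the discrepancy by a constant multiple of $\eta$; and on the absorbing phase the invariance $f([1-2\eta,1]) \subseteq [1-2\eta,1]$ together with $\{x_n\}\subset[1-\eta,1]$ keeps both terms within $2\eta$. The main obstacle I anticipate is the nontrivial lingering regime: since $f$ is not assumed monotone on $[0, \eta]$, the escape-time function $y \mapsto \min\{k : f^k(y) > 2\eta\}$ is generally neither continuous nor monotone, so the intermediate value argument that produces the starting point $y$ requires a careful choice of sub-interval on which $f$ (or some iterate of $f$) is orientation preserving near $0$. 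A clean fallback, which I would adopt if the direct construction proves awkward, is to work with the nested compact sets $Y_n = \{y \in [0,1] : |f^k(y) - x_k| \le \epsilon \text{ for every } 0 \le k \le n\}$ and reduce the shadowing conclusion to proving each $Y_n$ non-empty by finite induction via the same phase decomposition, after which compactness and the Cantor intersection theorem supply the shadowing point. The symmetric canonical case $f(x) < x$ on $(0,1)$ is handled by interchanging the roles of $0$ and $1$.
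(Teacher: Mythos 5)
The paper contains no proof of this statement: it is quoted verbatim from \cite[Lemma 4.1]{CLS}, so there is nothing internal to compare against and your argument has to stand on its own. Its architecture does: reducing by conjugation to $f(x)>x$ on $(0,1)$, deducing $f(1)=1$, extracting a positive minimum $\gamma$ of $f(x)-x$ on $[\eta,1-\eta]$ to bound the transit time of any $\delta$-pseudo orbit ($\delta<\gamma/2$) by $N=\lceil 2/\gamma\rceil$, and splitting the pseudo orbit into lingering, transient and absorbing phases is the standard, and correct, route to this lemma.

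There is, however, one genuine quantitative gap, at the handoff between the lingering and transient phases. You choose $y$ with $|f^{n_1}(y)-x_{n_1}|<\eta$ and then assert that over the next $N$ iterates the discrepancy is bounded by ``a constant multiple of $\eta$.'' For a merely continuous $f$ this fails: an initial offset of size $\eta$ propagates under $f^k$ to $\omega_{f^k}(\eta)$, the modulus of continuity of $f^k$, which is not $O(\eta)$ and can be of order $1$; nor can you escape by shrinking $\eta$, since $N=N(\eta)\to\infty$ as $\eta\to 0$, so the number of iterates to be controlled grows exactly as the offset shrinks --- the quantifiers are circular. The repair lives inside your own construction and should be made explicit: fix $\eta=\epsilon/4$ and $N$ first, let $\delta$ be the input tolerance supplied by uniform continuity of $f,\dots,f^N$ (as you already arrange), and then make the handoff \emph{exact} rather than within $\eta$. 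Setting $c=\sup\lbrace t : f^k([0,t])\subseteq[0,2\eta] \text{ for all } k\leq n_1\rbrace$, one checks that $f^{n_1}(c)=2\eta$ (an exit at an earlier time $k<n_1$ would force $f^{k+1}(c)=f(2\eta)>2\eta$, contradicting the definition of $c$), so $f^{n_1}([0,c])\supseteq[0,2\eta]\ni x_{n_1}$ and the intermediate value theorem produces $y\in[0,c]$ with $f^{n_1}(y)=x_{n_1}$ exactly, while $f^k(y)\in[0,2\eta]$ for all $k\le n_1$. The transient-phase error is then the finite-time shadowing of the pseudo orbit by its own initial point $x_{n_1}$, which is governed by $\delta$, not by $\eta$. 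With this change (and noting that an infinite lingering phase is handled by $y=0$), your argument closes, and the $Y_n$/Cantor-intersection fallback is not needed.
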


\begin{theorem}\cite[Lemma 3.11]{KLMP}
Every topologically stable point of a homeomorphism on a compact manifold of dimension atleast $2$, is a shadowable point.
\label{T2.6}
\end{theorem}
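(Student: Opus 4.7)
The plan is to adapt Walters's classical argument that topological stability implies shadowing to the pointwise setting at $x$. Fix $\epsilon>0$ and let $\delta>0$ be the perturbation tolerance provided by topological stability at $x$, so that every homeomorphism $g$ with $D(f,g)<\delta$ admits a continuous $h$ close to the identity intertwining $g$ with $f$ near $x$. The goal is to exhibit some $\delta'>0$ such that every $\delta'$-pseudo orbit $\{x_i\}$ through $x$ is $\epsilon$-traced by a single point of the manifold.

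The heart of the argument is to realise such a pseudo orbit as a genuine orbit of a homeomorphism $g$ with $D(f,g)<\delta$. One shrinks $\delta'$ so that the points $f(x_i)$ admit pairwise disjoint tiny coordinate balls $U_i$ with $x_{i+1}\in U_i$, and then inside each $U_i$ one constructs a homeomorphism of the manifold supported in $U_i$ that sends $f(x_i)$ to $x_{i+1}$ and is the identity on the boundary. Composing $f$ with all these local pushes yields $g$ with $g(x_i)=x_{i+1}$ for every $i$. The hypothesis $\dim\geq 2$ is crucial here: on a manifold of dimension at least two, a closed ball minus finitely many interior points remains path-connected, so each local shift can be realised by an isotopy to the identity supported in a small disk without interfering with the other supports.

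With $g$ in hand, pointwise topological stability at $x$ furnishes a continuous $h:X\to X$ with $D(h,\mathrm{id})<\epsilon$ and $f\circ h=h\circ g$ on a neighbourhood of the full orbit $\{g^i(x)\}_i=\{x_i\}_i$. Setting $z=h(x)$, one computes
\[
f^i(z)=f^i(h(x))=h(g^i(x))=h(x_i),
\]
so that $d(f^i(z),x_i)=d(h(x_i),x_i)<\epsilon$ for each $i$. This exhibits $z$ as the required $\epsilon$-tracing point of $\{x_i\}$ and hence places $x$ in $Sh(f)$.

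The main obstacle lies in the perturbation step, and this is precisely where the dimension hypothesis is unavoidable. One must schedule the supports of the local perturbations so that $D(f,g)$ remains below $\delta$ while the perturbations accumulate along the (possibly bi-infinite) pseudo orbit; this forces $\delta'$ to be chosen relative to a modulus of uniform continuity of $f$, and in the bi-infinite case one typically handles the forward and backward halves separately before gluing. In dimension one the corresponding construction fails because pseudo orbits on an interval can prescribe order violations that no uniformly small homeomorphism can realise, which is exactly why the statement needs $\dim\geq 2$. A secondary technical point is to verify that the pointwise notion of topological stability at $x$ is robust enough to yield a conjugating $h$ defined on a neighbourhood of the whole orbit $\{x_i\}_i$ rather than at $x$ alone; this is usually arranged by applying the pointwise definition to $x$ and then propagating the conjugacy along the orbit via $f$ and $g$.
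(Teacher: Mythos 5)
This statement is imported from \cite[Lemma 3.11]{KLMP} and the paper gives no proof of its own, so your proposal can only be judged against the standard argument (Walters's, which is what \cite{KLMP} adapts). Your overall architecture is the right one: realise the pseudo orbit as a genuine orbit of a nearby homeomorphism $g$, invoke topological stability at $x$ to get a semiconjugacy $h$ with $f\circ h=h\circ g$ on $\overline{\mathcal{O}}_{g}(x)$, and read off the tracing point $z=h(x)$. Note also that your ``secondary technical point'' is a non-issue: in the pointwise definition used here, $h$ is defined on the closure of the $g$-orbit of $x$, and once $g(x_i)=x_{i+1}$ with $x_0=x$ every $x_i=g^i(x)$ lies in that orbit, so no propagation step is needed.

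The genuine gap is in the perturbation step. You propose to choose pairwise disjoint coordinate balls $U_i$ around the points $f(x_i)$ of the \emph{entire} (possibly bi-infinite) pseudo orbit and push $f(x_i)$ to $x_{i+1}$ inside each. On a compact manifold an infinite set of points must accumulate, so pairwise disjoint balls around all the $f(x_i)$ do not exist, and near an accumulation point any admissible ball is too small to absorb a displacement of size up to $\delta'$; splitting into forward and backward halves does not repair this. The standard fix, which your sketch omits, has two parts: first reduce $\epsilon$-tracing of arbitrary pseudo orbits to $\epsilon$-tracing of \emph{finite} ones by a compactness/diagonal argument (Walters's Lemma 8, reproduced as statement (i) in the proof of Proposition \ref{T3.11} of this paper), and second perturb a finite pseudo orbit slightly so that its points become pairwise distinct before constructing the local pushes (Lemma 9, statement (ii) there); only then does the dimension-$\geq 2$ interpolation lemma (Lemma 10, statement (iii)) produce the required $g$ with $D(f,g)<\delta$. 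Without the reduction to finite pseudo orbits and the distinctness perturbation, the construction of $g$ as you describe it does not go through.
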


\begin{theorem}\cite[Lemma 3.14]{KLMP}
Every shadowable point of a homeomorphism on a compact metric space, is a $\alpha$-persistent point.
\label{T2.7}
\end{theorem}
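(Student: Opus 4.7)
The plan is a direct pseudo-orbit argument: given the shadowable-point data of $f$ at $x$, I will absorb any homeomorphism $g$ sufficiently $D$-close to $f$ by treating the full $g$-orbit of $x$ as a pseudo-orbit for $f$ and then tracing it by an $f$-orbit. In the setup of \cite{KLMP}, $\alpha$-persistence at $x$ is the following assertion: for every $\epsilon > 0$ there exists $\delta > 0$ such that for every homeomorphism $g$ of $X$ with $D(f, g) < \delta$, some $y\in X$ satisfies $d(f^{n}(y), g^{n}(x)) < \epsilon$ for all $n\in\mathbb{Z}$.

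Given $\epsilon > 0$, first invoke $x\in Sh(f)$ to produce $\delta_{x}^{f}(\epsilon) > 0$ furnishing $\epsilon$-tracing for every $\delta_{x}^{f}(\epsilon)$-pseudo orbit through $x$, and then set $\delta := \min\lbrace\delta_{x}^{f}(\epsilon), 1/2\rbrace$. For any homeomorphism $g$ with $D(f, g) < \delta$, consider the bi-infinite sequence $\rho = \lbrace g^{n}(x)\rbrace_{n\in\mathbb{Z}}$, which passes through $x$ at $n = 0$. For each $n\in\mathbb{Z}$,
\[
\overline{d}(f(g^{n}(x)), g^{n+1}(x)) = \overline{d}(f(g^{n}(x)), g(g^{n}(x))) \leq D(f, g) < \delta < 1,
\]
so $d(f(g^{n}(x)), g^{n+1}(x)) < \delta \leq \delta_{x}^{f}(\epsilon)$, meaning $\rho$ is a genuine $\delta_{x}^{f}(\epsilon)$-pseudo orbit for $f$ through $x$. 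Applying shadowability at $x$ yields $y\in X$ with $d(f^{n}(y), g^{n}(x)) < \epsilon$ for every $n\in\mathbb{Z}$, which is exactly the $\alpha$-persistence condition.

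The only real subtlety is the passage between $d$ and the bounded metric $\overline{d}$ defining $D$, which I handle by truncating $\delta$ below $1$ at the outset. Compactness of $X$ enters only in the background, placing $f^{\pm 1}$ and $g^{\pm 1}$ in $UE(X)$ and making $D$ the natural uniform metric on the space of homeomorphisms; the dynamical core of the argument is the one-line pseudo-orbit inequality displayed above. I therefore expect the main obstacle to be not the dynamics, but merely bookkeeping: carefully matching the quantifier structure of the shadowable-point definition (bi-infinite pseudo-orbits through $x$) with that of the $\alpha$-persistence definition (bi-infinite orbits of perturbed homeomorphisms through $x$).
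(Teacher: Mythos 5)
Your proof is correct and is essentially the argument of the cited result \cite[Lemma 3.14]{KLMP} (the paper itself quotes this lemma without reproving it): the $g$-orbit of $x$ is a $\delta$-pseudo-orbit for $f$ through $x$, and tracing it gives exactly the $\alpha$-persistence condition. The handling of the bounded metric $\overline{d}$ versus $d$ by truncating $\delta$ below $1$ is the right bookkeeping, and no step is missing.
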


\section{Sufficient conditions to be a Dynamic point of limit}
In this section, we aim to derive sufficient conditions under which the limit of a sequence of functions exhibits particular dynamical behaviour at a point like expansivity, $\mu$-expansivity, transitivity, mixing, Devaney chaos, shadowing, specification, topological stability etcetera. 
We need following notions to state and prove our main results.

Let $f,f_n\in UC(X)$ for each $n\in\mathbb{N}^+$. Then, 
\\
(i) $f_{n}$ is weak orbitally convergent to $f$ or $f_{n}\xrightarrow[]{\text{woc}} f$, if for every pair $k\in \mathbb{N}^{+}$ and $\epsilon > 0$, there exists an $N(\epsilon, k) = N\in \mathbb{N}^{+}$ such that $d(f_{n}^{k}(x), f^{k}(x)) < \epsilon$, for all $n\geq N$ and for each $x\in X$.   
\\
(ii) $f_{n}$ is pointwise weak orbitally convergent to $f$ or $f_{n}\xrightarrow[]{\text{pwoc}} f$, if for every triplet $x\in X$,  $k\in \mathbb{N}^{+}$ and $\epsilon>0$, there exists an $N(x, \epsilon, k) = N\in \mathbb{N}^{+}$ such that $d(f_{n}^{k}(x), f^{k}(x)) < \epsilon$, for all $n\geq N$.   
\medskip

\begin{Remark}
From corresponding definitions and Theorem \ref{T2.2}, we observe that every orbitally convergent sequence is uniformly convergent, every uniformly convergent sequence is weak orbitally convergent, every weak orbitally convergent sequence is pointwise weak orbitally convergent and every pointwise weak orbitally convergent sequence is pointwise convergent.  
\end{Remark}

Next examples shows that a uniformly convergent (and hence weak orbitally convergent) sequence need not be orbitally convergent.
\begin{Example}
Let $\alpha_{n}$ be a strictly increasing sequence of positive irrationals converges to $1$. Define $f_{n}$ by $f_{n}(x) = xe^{2\pi i \alpha_{n}}$, for each $x\in \mathbb{S}^{1}$ and for each $n\in \mathbb{N}^{+}$. Clearly, $f_{n}\xrightarrow[]{uc} I_{\mathbb{S}^{1}}$, where $I_{S^{1}}$ is the identity map on $\mathbb{S}^{1}$. Suppose that $f_{n}\xrightarrow[]{oc} I_{S^{1}}$. Then, for sufficiently small $\epsilon$, there exists an $N\in \mathbb{N}^{+}$ such that $d(f_{n}^{k}(x), x) < \epsilon$, for all $n\geq N$, for all $k\in \mathbb{N}$ and for each $x\in X$ implying $\lbrace f_{N}^{k}(x) : k\in \mathbb{N}\rbrace$ is not dense in $S^{1}$, which is a contradiction. Hence, $f_{n}$ is not  orbitally convergent to $I_{S^{1}}$.
\label{EN1}
\end{Example}

Notions of $\alpha$-persistent points and topologically stable points for homeomorphisms on compact metric spaces  \cite{KLMP} can be extended to homeomorphisms on arbitrary metric spaces. We provide these definitions below.
A point $x\in X$ is said to be an $\alpha$-persistent point of $f\in UE(X)$ if for every $\epsilon > 0$ there exists a $\delta_{x}^{f}(\epsilon) > 0$ such that for every $g\in UE(X)$ satisfying $D(f, g) < \delta_{x}^{f}(\epsilon)$, there is a $y\in X$ such that $d(f^{n}(y), g^{n}(x)) < \epsilon$, for each $n\in \mathbb{Z}$. The set of all $\alpha$-persistent points of $f$ is denoted by $P_{\alpha}(f)$. If for every $\epsilon > 0$ we can choose $\delta_{x}^{f}$ independent of choice of point $x\in X$, then we say that $f$ is $\alpha$-persistent. We set $P_{\alpha}(f, x, \epsilon) = \lbrace \delta > 0 : \delta$ corresponds to $\epsilon$ in the definition of $\alpha$-persistent point $x\rbrace$.
\medskip

A point $x\in X$ is said to be a topologically (weak topologically) stable point of $f\in UE(X)$ if for every $\epsilon > 0$, there exists a $\delta_{x}^{f}(\epsilon) > 0$ such that for every $g\in UE(X)$ satisfying $D(f, g) < \delta_{x}^{f}(\epsilon)$, there is a continuous map $h : \overline{\mathcal{O}}_{g}(x) \rightarrow X$ such that $f\circ h = h\circ g$ ($d(f^{n}(h(y)), g^{n}(y)) < \epsilon$, for each $n\in\mathbb{Z}$ and for each $y\in \overline{\mathcal{O}}_{g}(x)$) and $d(h(y), y) < \epsilon$, for each $y\in \overline{\mathcal{O}}_{g}(x)$, where $\mathcal{O}_{g}(x) = \lbrace g^{n}(x) : n\in \mathbb{Z}\rbrace$. The set of all topologically (weak topologically) stable points of $f$ is denoted by $Ts(f)$ ($Wts(f)$). We set $Wts(f, x, \epsilon) = \lbrace \delta > 0 : \delta$ corresponds to $\epsilon$ in the definition of weak topologically stable point $x\rbrace$.  
We say that $f\in UE(X)$ is topologically stable (weak topologically stable) if for every $\epsilon > 0$, there exists a $\delta > 0$ such that for every $g\in UE(X)$ satisfying $D(f, g) < \delta$, there is a continuous map $h : X \rightarrow X$ such that $f\circ h = h\circ g$ ($d(f^{n}(h(x)), g^{n}(x)) < \epsilon$, for each $n\in\mathbb{Z}$ and for each $x\in X$) and $d(h(x), x) < \epsilon$, for each $x\in X$.
\medskip

\begin{theorem}
Let $\lbrace f_{n}\rbrace_{n\in \mathbb{N}^{+}}$ be a sequence of uniformly continuous maps on $X$. If $\lbrace f_{n}\rbrace_{n\in \mathbb{N}^{+}}$ is pointwise weak orbitally converging to $f\in UC(X)$, then the following statements are true:
\begin{enumerate}
\item[(i)] $x\in E^{+}(f)$ if and only if there exists a $ \delta > 0$ such that  $\cup_{m\geq 1} \cap_{n\geq m} E^{+}_{x}(f_{n}, y, \delta)\neq \phi$, for each $y\in X$.
\item[(ii)] $x\in Se(f)$ if and only if there exists a $\delta > 0$ such that for every $\epsilon > 0$, $\cup_{m\geq 1} \cap_{n\geq m} Se_{x}(f_{n}, \epsilon, \delta)\neq \phi$.
\item[(iii)] $x\in P(f)$ if and only if for every $\epsilon > 0$, $\cup_{z\in B^{-}(x, \epsilon)} \cap_{\delta > 0}\cup_{m\geq 1}\cap_{n\geq m} P(f_{n}, z, \delta)\neq \phi$.
\item[(iv)] $x\in Tt(f)$ if and only if for every non-empty open set $U$ containing $x$ and every non-empty open set $V$, $\cup_{m\geq 1}\cap_{n\geq m} Tt(f_{n}, x, U, V)\neq \phi$.
\item[(v)] $x\in Tm(f)$ if and only if for every non-empty open set $U$ containing $x$ and every non-empty open set $V$, the image of the projection map $P: \cup_{m\geq 1}\cap_{n\geq m} Tt(f_{n}, x, U, V)\rightarrow N^{+}$, is cofinite.
\item[(vi)] If $f_{n}, f\in UE(X)$ for each $n\in \mathbb{N}^{+}$ and $f_{n}^{-1}\xrightarrow{pwoc} f^{-1}$, then $x\in E(f)$ if and only if there exists a $\delta > 0$ such that  $\cup_{m\geq 1} \cap_{n\geq m} E_{x}(f_{n}, y, \delta)\neq \phi$, for each $y\in X$.
\end{enumerate}
\label{T4.1}
\end{theorem}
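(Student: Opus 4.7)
The plan for all six parts rests on a single passage-to-the-limit observation: since $f_n$ converges pointwise weak orbitally to $f$, for any fixed $k\in\mathbb{N}^+$ and points $z,w\in X$ the continuity of the metric gives
\[
d(f_n^k(z), f_n^k(w)) \longrightarrow d(f^k(z), f^k(w)) \quad\text{as } n\to\infty.
\]
Consequently a strict inequality $d(f^k(z),f^k(w)) > \delta$ is inherited by $f_n^k$ with a slightly smaller constant once $n$ is large enough, and conversely, if the inequality holds uniformly for all large $n$ then it passes to the limit as a non-strict inequality. I would record this once at the outset and invoke it throughout.

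Parts (i), (ii) and (vi) then follow by translating the definitions through this lemma. For the forward direction of (i), take the expansivity constant $\delta_x^f$, set $\delta = \delta_x^f/2$, and for each $y\neq x$ choose an integer $k$ witnessing expansivity; eventual persistence of the strict inequality places $k$ in $\bigcap_{n\ge m}E_x^+(f_n,y,\delta)$ for some $m$. The backward direction takes the expansivity constant of $x$ to be any positive number below the given $\delta$ and concludes from $d(f^k(x),f^k(y))\ge\delta$. Part (ii) is the same argument with the distinguished point drawn from $B(x,\epsilon)$, and (vi) is identical except that for negative powers one appeals to the additional hypothesis $f_n^{-1}\xrightarrow[]{\text{pwoc}} f^{-1}$ to transfer inequalities at $k<0$.

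For (iii), unpacking the nested quantifiers shows that the condition "$k$ belongs to $\bigcap_{\delta>0}\bigcup_m\bigcap_{n\ge m}P(f_n,z,\delta)$" is precisely $f_n^k(z)\to z$, which by pointwise weak orbital convergence holds if and only if $f^k(z)=z$. The outer statement therefore says exactly that every deleted $\epsilon$-ball around $x$ contains a periodic point of $f$, giving the equivalence with $x\in P(f)$ in both directions.

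Parts (iv) and (v) contain the only genuine obstacle: if $f_n^k(y)\in V$ for all large $n$, the limit $f^k(y)$ lies in $\overline{V}$, not necessarily in $V$ itself. To bridge this gap I would use the fact that in a metric space every nonempty open $V$ contains a nonempty open $V'$ with $\overline{V'}\subset V$ (a small ball inside $V$), and apply the hypothesis to $V'$ rather than $V$; then $f^k(y)\in\overline{V'}\subset V$ is exactly what is required. The forward directions are routine: pick $y\in U$ and $k$ realising transitivity or a mixing index, use openness of $V$ to fit an open ball around $f^k(y)$ into $V$, and apply the lemma. For (v) the argument is carried out uniformly in $k$, showing the image of the projection contains $N(f,x,U,V)$ and is hence cofinite; the converse combines the shrinking trick with the fact that a superset of a cofinite set is cofinite. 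With this regularity step in hand, all six statements reduce to bookkeeping over the eventually-for-large-$n$ block $\bigcup_m\bigcap_{n\ge m}$.
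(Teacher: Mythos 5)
Your proposal is correct and follows essentially the same route as the paper: for each fixed power $k$ one transfers strict distance inequalities between $f^k$ and $f_n^k$ at finitely many points (the paper loses a factor $3$ where you lose a factor $2$, which is immaterial), characterises periodicity via $f_n^k(z)\to f^k(z)$, and handles (iv)--(v) by the same shrink-to-a-smaller-ball trick to keep limits inside the open set $V$. The only (minor, harmless) divergence is that you shrink only $V$ in the converse of (iv)--(v), whereas the paper also replaces $U$ by a small ball; your version is if anything cleaner since it keeps the neighbourhood of $x$ intact.
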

\begin{proof}
Suppose that $\lbrace f_{n}\rbrace_{n\in \mathbb{N}^{+}}$ is a sequence of uniformly continuous maps on $X$ pointwise weak orbitally converging to $f\in UC(X)$:
\begin{enumerate}
\item[(i)] Let $x\in E^{+}(f)$ with pointwise expansivity constant $\delta_{x}^{f} > 0$ and choose $y\in X$. Fix $k(y) = k \in \mathbb{N}$ such that $d(f^{k}(x), f^{k}(y)) > \delta_{x}^{f}$. Since $f_{n}\xrightarrow[]{pwoc} f$, we have $f_{n}^{k}\xrightarrow[]{pc} f^{k}$. Choose $N(y) = N\in \mathbb{N}^{+}$ such that $d(f_{n}^{k}(z), f^{k}(z)) < \frac{\delta_{x}^{f}}{3}$, for all $z\in \lbrace x, y\rbrace$ and for all $n\geq N$. Note that, $d(f_{n}^{k}(x), f_{n}^{k}(y)) > \frac{\delta_{x}^{f}}{3}$, for all $n\geq N$ implying $k\in \cap_{n\geq N} E^{+}_{x}(f_{n}, y, \frac{\delta_{x}^{f}}{3})$. Since $y$ is chosen arbitrarily, we have $\cup_{m\geq 1} \cap_{n\geq m} E^{+}_{x}(f_{n}, y, \frac{\delta_{x}^{f}}{3})\neq \phi$, for each $y\in X$.
\medskip

Conversely, choose $\delta > 0$ such that  $\cup_{m\geq 1} \cap_{n\geq m} E^{+}_{x}(f_{n}, y, \delta)\neq \phi$, for each $y\in X$. Let $y\in X$, $M(y) = M\in \mathbb{N}^{+}$ such that $\cap_{n\geq M} E^{+}_{x}(f_{n}, y, \delta)\neq \phi$ and $p\in \cap_{n\geq M} E^{+}_{x}(f_{n}, y, \delta)$. Since $f_{n}\xrightarrow[]{pwoc} f$, we have $f_{n}^{p}\xrightarrow[]{pc} f^{p}$. Choose $N(p)=N\geq M$ such that $d(f_{n}^{p}(z), f^{p}(z)) < \frac{\delta}{3}$, for all $z\in \lbrace x, y\rbrace$ and for all $n\geq N$. Then we must have $d(f^{p}(x), f^{p}(y)) > \frac{\delta}{3}$. Since $\delta$ does not depends on $y$, we get that $x\in E^{+}(f)$ with pointwise expansivity constant $\frac{\delta}{3}$.
\medskip

\item[(ii)] Proof is similar to the proof of part (i).
\medskip

\item[(iii)] Let $x\in P(f)$. For given $\epsilon > 0$, assume that $y\in B^{-}(x, \epsilon)$ is a periodic point of period $k\in \mathbb{N}^{+}$. Since $f_{n}\xrightarrow[]{pwoc} f$, we have $f_{n}^{k}\xrightarrow[]{pc} f^{k}$. Therefore for given $\delta > 0$, we can choose $N\in \mathbb{N}^{+}$ such that $d(f_{n}^{k}(y), y) < \delta$, for all $n\geq N$. 
Hence $k\in P(f_{n}, y, \delta)$ for all $n\geq N$ implying $k\in \cap_{n\geq N} P(f_{n}, y, \delta)$ i.e. $k\in \cup_{m\geq 1}\cap_{n\geq m} P(f_{n}, y, \delta)$. 
Since $\delta$ is chosen arbitrarily, $k\in \cup_{m\geq 1}\cap_{n\geq m} P(f_{n}, y, \delta)$ for each $\delta > 0$ implying $k\in$ 
$\cap_{\delta > 0} \cup_{m\geq 1}\cap_{n\geq m} P(f_{n}, y, \delta)$ and hence $\cup_{z\in B^{-}(x, \epsilon)} \cap_{\delta > 0}\cup_{m\geq 1}\cap_{n\geq m} P(f_{n}, z, \delta)\neq \phi$. 
\medskip

Conversely, for given $\epsilon > 0$ assume that $\cup_{z\in B^{-}(x, \epsilon)} \cap_{\delta > 0}\cup_{m\geq 1}\cap_{n\geq m} P(f_{n}, z, \delta)\neq \phi$. Choose $y\in B^{-}(x, \epsilon)$ and $k\in \mathbb{N}^{+}$ such that $k\in \cup_{m\geq 1}\cap_{n\geq m} P(f_{n}, y, \delta)$, for each $\delta > 0$. Thus every $\delta$-ball centred at $y$ contains a tail of sequence $\lbrace f_{n}^{k}(y)\rbrace_{n\in \mathbb{N}^{+}}$ implying $f_{n}^{k}(y)$ converges to $y$. Since $f_{n}^{k}\xrightarrow{pc} f^{k}$, we have $f^{k}(y) = y$. Since $\epsilon$ is chosen arbitrarily, $x\in P(f)$.
\medskip

\item[(iv)] Let $x\in Tt(f)$. Then, for given non-empty open set $U$ containing $x$ and non-empty open set $V$, there exists a $k\in  \mathbb{N}^{+}$ satisfying $f^{k}(U)\cap V \neq \phi$. Thus we can choose $y\in U$ such that $f^{k}(y)\in V$. Since $f_{n}^{k}\xrightarrow{pc} f^{k}$, there exists an $N\in \mathbb{N}^{+}$  such that $f_{n}^{k}(y) \in V$, for all $n\geq N$. Hence $(y, k)\in \cap_{n\geq N} Tt(f_{n}, x, U, V)$ implying $\cup_{m\geq 1}\cap_{n\geq m} Tt(f_{n}, x, U, V)\neq \phi$. 
\medskip

Conversely, choose non-empty open set $U$ containing $x$ and  non-empty open set $V$. Let $y\in U$, $z\in V$ and $\epsilon > 0$ such that $B(y, \epsilon) \subset U$ and $B(z, \epsilon)\subset V$. Set $U' = B(y, \frac{\epsilon}{4})$ and $V' = B(z, \frac{\epsilon}{4})$. By assumption, choose $w\in U'$ and $k, N\in \mathbb{N}^{+}$ such that $(w, k)\in \cap_{n\geq N} Tt(f_{n}, x, U', V')$. Since $f_{n}^{k}\xrightarrow{pc} f^{k}$, choose $M\in \mathbb{N}^{+}$ such that $d(f_{n}^{k}(w), f^{k}(w)) < \frac{\epsilon}{4}$, for all $n\geq M$. Thus for all $n\geq max\lbrace N, M\rbrace$, $d(f_{n}^{k}(w), z)< \frac{\epsilon}{4}$ and $d(f_{n}^{k}(w), f^{k}(w)) < \frac{\epsilon}{4}$. Hence $f^{k}(w)\in B(z, \frac{\epsilon}{2})\subset V$ implying $f^{k}(U)\cap V\neq \phi$. Since $U$ and $V$ are chosen arbitrarily, we get that $x\in Tt(f)$.
\medskip

\item[(v)] Let $x\in Tm(f)$. Then for given non-empty open set $U$ containing $x$ and non-empty open set $V$, there exists a $k\in  \mathbb{N}^{+}$ satisfying $f^{r}(U)\cap V \neq \phi$, for all $r\geq k$. Hence for each $r\geq k$, we can choose $y_{r}\in U$ such that $f^{r}(y_{r})\in V$. Since $f_{n}^{r}\xrightarrow{pc} f^{r}$ for each $r\geq k$, there exists an $N_{r}\in \mathbb{N}^{+}$  such that $f_{n}^{r}(y_{r}) \in V$, for all $n\geq N_{r}$ and for each $r\geq k$. Thus $(y_{r}, r)\in \cap_{n\geq N_{r}} Tt(f_{n}, x, U, V)$ for each $r\geq k$ implying $(y_{r}, r)\in \cup_{m\geq 1}\cap_{n\geq m} Tt(f_{n}, x, U, V)\neq \phi$, for each $r\geq k$ i.e. the image of $P$ contains $\lbrace r\in \mathbb{N}^{+} : r\geq k \rbrace$. Hence the image of the projection map $P$ is cofinite. 
\medskip

Conversely, choose non-empty open set $U$ containing $x$ and non-empty open set $V$. Let $y\in U$, $z\in V$ and $\epsilon > 0$ such that $B(y, \epsilon) \subset U$ and $B(z, \epsilon)\subset V$. Set $U' = B(y, \frac{\epsilon}{4})$ and $V' = B(z, \frac{\epsilon}{4})$. Suppose that the image of the projection map $P: \cup_{m\geq 1}\cap_{n\geq m} Tt(f_{n}, x, U', V')\rightarrow N^{+}$ is cofinite. Thus there exists a $k\in \mathbb{N}^{+}$ such that $\lbrace r\in \mathbb{N}^{+} : r\geq k\rbrace$ is contained in the image of $P$. For each $r\geq k$, choose $y_{r}\in U'$ such that $(y_{r}, r)\in \cup_{m\geq 1}\cap_{n\geq m} Tt(f_{n}, x, U', V')$. For each $r\geq k$, fix $N_{r}\in \mathbb{N}^{+}$ such that $(y_{r}, r)\in \cap_{n\geq N_{r}} Tt(f_{n}, x, U', V')$. Since $f_{n}^{r}\xrightarrow[]{pc} f^{r}$, choose $M_{r}\in \mathbb{N}^{+}$ such that $d(f_{n}^{r}(y_{r}), f^{r}(y_{r})) < \frac{\epsilon}{4}$, for all $n\geq M_{r}$ and for each $r\geq k$. Set $Q_{r} = max \lbrace N_{r}, M_{r}\rbrace$. Thus for each $r\geq k$, $d(f_{n}^{r}(y_{r}), z)< \frac{\epsilon}{4}$ and $d(f_{n}^{r}(y_{r}), f^{r}(y_{r})) < \frac{\epsilon}{4}$, for all $n\geq Q_{r}$. Hence $f^{r}(y_{r})\in B(z, \frac{\epsilon}{2})\subset V$, for each $r\geq k$ implying $f^{r}(U)\cap V\neq \phi$ for all $r\geq k$. Hence $x\in Tm(f)$.
\medskip

\item[(vi)] Proof is similar to the proof of part (i).
\end{enumerate}
\end{proof}

\begin{Corollary}
Let $\lbrace f_{n}\rbrace_{n\in \mathbb{N}^{+}}$ be a sequence of uniformly continuous maps on an infinite metric space $X$. If $\lbrace f_{n}\rbrace_{n\in \mathbb{N}^{+}}$ is pointwise weak orbitally converging to $f\in UC(X)$, then $x\in Dc(f)$ if and only if the following conditions holds:
\begin{enumerate}
\item For every $\epsilon > 0$, $\cup_{z\in B^{-}(x, \epsilon)} \cap_{\delta > 0}\cup_{m\geq 1}\cap_{n\geq m} P(f_{n}, z, \delta)\neq \phi$.
\item For every non-empty open set $U$ containing $x$ and every non-empty open set $V$, $\cup_{m\geq 1}\cap_{n\geq m} Tt(f_{n}, x, U, V)\neq \phi$.
\end{enumerate}
\label{C4.2}
\end{Corollary}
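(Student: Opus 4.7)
The plan is to derive this corollary directly from Theorem \ref{T4.1} combined with Theorem \ref{T2.1}. The key observation is that the two listed conditions are precisely the characterizations of $x\in P(f)$ and $x\in Tt(f)$ coming from parts (iii) and (iv) of Theorem \ref{T4.1}, while the sensitivity requirement is handled for free on an infinite metric space via Theorem \ref{T2.1}.

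For the forward implication, I would assume $x\in Dc(f) = Tt(f)\cap P(f)\cap Se(f)$. From $x\in P(f)$, Theorem \ref{T4.1}(iii) (applied under the standing hypothesis that $f_{n}\xrightarrow{\text{pwoc}} f$) immediately gives condition (1). From $x\in Tt(f)$, Theorem \ref{T4.1}(iv) immediately gives condition (2). No sensitivity information is needed in this direction.

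For the converse, I would assume conditions (1) and (2). By the converse halves of Theorem \ref{T4.1}(iii) and (iv), we obtain $x\in P(f)$ and $x\in Tt(f)$ respectively. Since $X$ is infinite and $f\in UC(X)$, Theorem \ref{T2.1} applies to give $x\in Se(f)$. Combining these three memberships yields $x\in Tt(f)\cap P(f)\cap Se(f) = Dc(f)$.

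There is no real obstacle here; the entire proof is a bookkeeping assembly of previously established facts. The only subtlety worth noting is that the corollary, as stated, does not require the user to verify a sensitivity condition pointwise along $\{f_n\}$, because transitivity plus density of periodic points upgrades to sensitivity automatically on infinite metric spaces via Theorem \ref{T2.1}. Consequently the proof reduces to citing Theorem \ref{T4.1}(iii), Theorem \ref{T4.1}(iv), and Theorem \ref{T2.1} in sequence.
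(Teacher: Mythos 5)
Your proposal is correct and matches the paper's own proof, which simply cites Theorem \ref{T2.1}, Theorem \ref{T4.1}(iii) and Theorem \ref{T4.1}(iv); you have merely spelled out the bookkeeping explicitly. The observation that sensitivity comes for free from transitivity plus dense periodic points on an infinite space is exactly the intended point.
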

\begin{proof}
Proof follows from Theorem \ref{T2.1}, Theorem \ref{T4.1}(iii) and Theorem \ref{T4.1}(iv).
\end{proof}

\begin{theorem}
Let $\lbrace f_{n}\rbrace_{n\in \mathbb{N}^{+}}$ be a sequence of uniformly continuous maps on $X$. If $\lbrace f_{n}\rbrace_{n\in \mathbb{N}^{+}}$ is pointwise weak orbitally converging to map $f\in UC(X)$ and $\mu$ is a non-atomic Borel measure on $X$, then the following statements are true:
\begin{enumerate}
\item[(i)] $x\in E^{+}(f, \mu)$ if and only if there exists a $ \delta > 0$ such that $\mu(\lbrace y\in X : \cup_{m\geq 1} \cap_{n\geq m} E^{+}_{x}(f_{n}, y, \delta) = \phi\rbrace) = 0$.
\item[(ii)] $x\in EM^{+}(f)$ if and only if there exists a $\delta > 0$ such that $\nu(\lbrace y\in X : \cup_{m\geq 1} \cap_{n\geq m} E^{+}_{x}(f_{n}, y, \delta) = \phi \rbrace) = 0$, for every non-atomic Borel measure $\nu$ on $X$.
\item[(iii)] $x\in ES^{+}(f)$ if and only if there exists a $\delta > 0$ such that $\nu(\lbrace y\in X : \cup_{m\geq 1} \cap_{n\geq m} E^{+}_{x}(f_{n}, y, \delta) = \phi \rbrace) = \mu(x)$, for every Borel measure $\nu$ on $X$.
\item[(iv)] If $f_{n},f\in UE(X)$, for each $n\in \mathbb{N}^{+}$ and $f_{n}^{-1}\xrightarrow{pwoc} f^{-1}$, then \\
(a) $x\in E(f, \mu)$ if and only if there exists a $\delta > 0$ such that $\mu(\lbrace y\in X : \cup_{m\geq 1} \cap_{n\geq m} E_{x}(f_{n}, y, \delta) = \phi\rbrace) = 0$.\\
(b) $x\in EM(f)$ if and only if there exists a $\delta > 0$ such that $\nu(\lbrace y\in X : \cup_{m\geq 1} \cap_{n\geq m} E_{x}(f_{n}, y, \delta) = \phi \rbrace) = 0$, for every non-atomic Borel measure $\nu$ on $X$.\\
(c) $x\in ES(f)$ if and only if there exists a $\delta > 0$ such that $\nu(\lbrace y\in X : \cup_{m\geq 1} \cap_{n\geq m} E_{x}(f_{n}, y, \delta) = \phi \rbrace) = \mu(x)$, for every Borel measure $\nu$ on $X$.
\end{enumerate}
\label{T4.3}
\end{theorem}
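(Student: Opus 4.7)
The plan is to prove all six statements in parallel by reducing each to a pair of set inclusions, following the same pattern used in Theorem \ref{T4.1}(i) and (vi), and then applying monotonicity of the measure. I will abbreviate
$$A^{+}_{\delta}(x) := \Bigl\{y\in X : \bigcup_{m\geq 1}\bigcap_{n\geq m} E^{+}_{x}(f_{n},y,\delta) = \phi\Bigr\},$$
with $A_{\delta}(x)$ the analogous set obtained by replacing $E^{+}_{x}$ with $E_{x}$ in the invertible setting. Unwinding the quantifiers, $y\in A^{+}_{\delta}(x)$ precisely when for every $k\in \mathbb{N}$ there are infinitely many $n$ with $d(f_{n}^{k}(x),f_{n}^{k}(y))\leq \delta$, which is the finite-stage analogue of the defining condition of $\Phi^{f}_{\delta}(x)$.

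The central step will be to show that, for each $\delta>0$,
$$\Phi^{f}_{\delta/3}(x)\ \subseteq\ A^{+}_{\delta}(x)\ \subseteq\ \Phi^{f}_{\delta}(x),$$
and similarly $\Gamma^{f}_{\delta/3}(x)\subseteq A_{\delta}(x)\subseteq \Gamma^{f}_{\delta}(x)$ under the hypothesis of (iv). For the right-hand inclusion, given $y\in A^{+}_{\delta}(x)$ and $k\in \mathbb{N}$, a subsequence $n_{j}\to\infty$ with $d(f_{n_{j}}^{k}(x),f_{n_{j}}^{k}(y))\leq \delta$ is available; since $f_{n}\xrightarrow{pwoc}f$ forces $f_{n}^{k}\xrightarrow{pc} f^{k}$, passing to the limit along $n_{j}$ yields $d(f^{k}(x),f^{k}(y))\leq \delta$. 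For the left-hand inclusion, $y\in \Phi^{f}_{\delta/3}(x)$ combined with the triangle inequality and $f_{n}^{k}(x)\to f^{k}(x)$, $f_{n}^{k}(y)\to f^{k}(y)$ gives $d(f_{n}^{k}(x),f_{n}^{k}(y))<\delta$ for all sufficiently large $n$, so $k$ fails to lie in $\bigcap_{n\geq m}E^{+}_{x}(f_{n},y,\delta)$ for every $m$, placing $y$ in $A^{+}_{\delta}(x)$. The bilateral version needed in (iv) is identical, the extra hypothesis $f_{n}^{-1}\xrightarrow{pwoc} f^{-1}$ being exactly what promotes $f_{n}^{k}\xrightarrow{pc} f^{k}$ from $k\in \mathbb{N}$ to all $k\in\mathbb{Z}$.

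With the inclusions in hand, the six parts fall out by sandwiching measures. Forward direction of (i): take $\delta$ to be a pointwise expansivity constant of $\mu$ at $x$, so $\mu(A^{+}_{\delta}(x))\leq \mu(\Phi^{f}_{\delta}(x))=0$. Converse of (i): the hypothesis $\mu(A^{+}_{\delta}(x))=0$ combined with the left inclusion gives $\mu(\Phi^{f}_{\delta/3}(x))=0$, so $\delta/3$ works as an expansivity constant. Parts (ii) and (iv)(b) are obtained by running the same two estimates simultaneously over every non-atomic Borel measure. For (iii) (and (iv)(c)) I will additionally use the trivial observation that $x\in A^{+}_{\delta}(x)$ for every $\delta$, which implies $\nu(\{x\})\leq \nu(A^{+}_{\delta}(x))\leq \nu(\Phi^{f}_{\delta}(x))$ for any Borel measure $\nu$; the defining constant of $ES^{+}(f)$ collapses the outer terms to $\nu(\{x\})$ in the forward direction, while in the converse one obtains $\nu(\{x\})\leq \nu(\Phi^{f}_{\delta/3}(x))\leq \nu(A^{+}_{\delta}(x))=\nu(\{x\})$.

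The proof is not genuinely hard; the only care needed is the quantifier bookkeeping relating the ``infinitely often in $n$'' condition defining $A^{+}_{\delta}(x)$ to the ``for all iterates $k$'' condition defining $\Phi^{f}_{\delta}(x)$, together with the standard loss of a factor of $3$ coming from the triangle inequality in the left inclusion. Once the two inclusions are established, each of (i)--(iv) is pure measure monotonicity, with the case of $ES^{+}$ and $ES$ differing only by the insertion of the lower bound $\nu(\{x\})$ coming from $x\in A^{+}_{\delta}(x)$.
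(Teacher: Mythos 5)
Your proposal is correct and follows essentially the same route as the paper: sandwich the set $A^{+}_{\delta}(x)$ between $\Phi^{f}_{\delta/3}(x)$ and a $\Phi^{f}$-set (respectively $\Gamma^{f}$-sets in the invertible case) and then conclude by monotonicity of the measure, exactly as the paper does by citing the inclusions derived from the argument of Theorem \ref{T4.1}(i). The only (harmless) variation is that your limit-along-a-subsequence argument gives the slightly sharper inclusion $A^{+}_{\delta}(x)\subseteq \Phi^{f}_{\delta}(x)$ where the paper records $A^{+}_{\delta/3}(x)\subseteq \Phi^{f}_{\delta}(x)$; both yield the same conclusions.
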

\begin{proof}
Following the steps as in the proof of Theorem \ref{T4.1}(i), it is easy to check that for every $\delta > 0$ and fixed $x\in X$ we have $\Phi^{f}_{\frac{\delta}{3}}(x)\subset \lbrace y\in X : \cup_{m\geq 1} \cap_{n\geq m} E^{+}_{x}(f_{n}, y, \delta) = \phi\rbrace$, $\lbrace y\in X : \cup_{m\geq 1} \cap_{n\geq m} E^{+}_{x}(f_{n}, y, \frac{\delta}{3}) = \phi\rbrace \subset \Phi^{f}_{\delta}(x)$, $\Gamma^{f}_{\frac{\delta}{3}}(x)\subset \lbrace y\in X : \cup_{m\geq 1} \cap_{n\geq m} E_{x}(f_{n}, y, \delta) = \phi\rbrace$ and $\lbrace y\in X : \cup_{m\geq 1} \cap_{n\geq m} E_{x}(f_{n}, y, \frac{\delta}{3}) = \phi\rbrace \subset \Gamma^{f}_{\delta}(x)$. Now, proofs of (i), (ii), (iii) and (iv) follow using above relations.
\end{proof}

\begin{theorem}
Let $\lbrace f_{n}\rbrace_{n\in \mathbb{N}^{+}}$ be a sequence of uniformly continuous maps on $X$. If $\lbrace f_{n}\rbrace_{n\in \mathbb{N}^{+}}$ is orbitally converging to $f\in UC(X)$, then the following statements are true:
\begin{enumerate}
\item[(i)] $x\in Sh^{+}(f)$ if and only if for every $\epsilon > 0$, $\cup_{m\geq 1}\cap_{n\geq m}Sh^{+}(f_{n}, x, \epsilon)\neq \phi$.
\item[(ii)] $x\in Sp(f)$ if and only if for every $\epsilon > 0$, $\cup_{m\geq 1}\cap_{n\geq m}Sp(f_{n}, x, \epsilon)\neq \phi$.
\end{enumerate}
\label{T4.4}
\end{theorem}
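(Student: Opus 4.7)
The plan is to prove both parts by a uniform three-$\epsilon$ argument, exploiting the fact that orbital convergence produces a single $N = N(\epsilon)$ for which $d(f_n^k(z), f^k(z)) < \epsilon$ holds \emph{simultaneously} for every $z \in X$ and every $k \in \mathbb{N}^+$. This is exactly the uniformity needed to translate $f$-pseudo-orbits, $f$-tracers and $f$-specification witnesses into their $f_n$-counterparts (and back), so the two directions end up as mirror images of each other.

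For part (i) forward, I would take $\epsilon > 0$, set $\delta_0 = \delta_x^f(\epsilon/3)$ coming from $x \in Sh^+(f)$, and use orbital convergence to choose $N$ with $d(f_n^k(z), f^k(z)) < \min\{\delta_0/2,\, \epsilon/3\}$ for all $n \geq N$, $z \in X$, $k \geq 1$. A $(\delta_0/2)$-pseudo-orbit $\{x_i\}$ of $f_n$ through $x$ then becomes a $\delta_0$-pseudo-orbit of $f$ by one triangle inequality, and the $\epsilon/3$-tracer $y$ produced by $f$-shadowability at $x$ serves as an $\epsilon$-tracer for the same sequence under $f_n$ by another; this places $\delta_0/2$ in $\bigcap_{n \geq N} Sh^+(f_n, x, \epsilon)$. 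The converse is symmetric: given $\delta$ and $N_0$ with $\delta \in \bigcap_{n \geq N_0} Sh^+(f_n, x, \epsilon/3)$, any $(\delta/2)$-pseudo-orbit of $f$ through $x$ is a $\delta$-pseudo-orbit of $f_n$ for $n$ large, and an $\epsilon/3$-tracer under a single such $f_n$ converts back to an $\epsilon$-tracer under $f$ by the same estimate.

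Part (ii) runs in exact parallel. For the forward direction, take $M_0 = M_x^f(\epsilon/3)$ and choose $N$ so that $d(f_n^i(z), f^i(z)) < \epsilon/3$ uniformly for $n \geq N$. For any admissible finite sequence $x = x_1,\ldots, x_k$ and gap-compatible intervals, the $f$-specification witness $y$ satisfies
$$d(f_n^i(y), f_n^i(x_j)) \leq d(f_n^i(y), f^i(y)) + d(f^i(y), f^i(x_j)) + d(f^i(x_j), f_n^i(x_j)) < \epsilon,$$
so $M_0 \in Sp(f_n, x, \epsilon)$ for every $n \geq N$. The reverse direction applies the hypothesis at tolerance $\epsilon/3$ to obtain $M$ and $N_0$ with $M \in \bigcap_{n \geq N_0} Sp(f_n, x, \epsilon/3)$, extracts for a single large $n$ an $f_n$-witness $y_n$ for the given data, and uses the analogous three-term estimate to recover $d(f^i(y_n), f^i(x_j)) < \epsilon$; hence $M \in Sp(f, x, \epsilon)$.

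The main obstacle, and the reason this theorem requires orbital convergence rather than the weaker pointwise weak orbital convergence sufficient for Theorems \ref{T4.1} and \ref{T4.3}, is that the bound $d(f_n^k(z), f^k(z)) < \epsilon$ has to hold uniformly over an unbounded range of iterates: the pseudo-orbit in (i) is indexed by all of $\mathbb{N}$, and the times $a_1,\ldots,b_k$ in (ii) are allowed to be arbitrary. Only orbital convergence supplies a single $N$ independent of the iterate $k$; under any weaker mode of convergence the tracing estimate would deteriorate as $k$ grows, breaking both directions of both equivalences.
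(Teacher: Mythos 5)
Your proposal is correct and follows essentially the same route as the paper's proof: both directions of both parts are handled by translating pseudo-orbits, tracers and specification witnesses between $f$ and $f_n$ via the triangle inequality, using the single iterate-independent $N(\epsilon)$ supplied by orbital convergence. The only cosmetic difference is your use of $\epsilon/3$ where the paper uses $\epsilon/2$ in part (i); the argument is otherwise identical.
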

\begin{proof}
Suppose that $\lbrace f_{n}\rbrace_{n\in \mathbb{N}^{+}}$ is a sequence of uniformly continuous maps on $X$ orbitally converging to $f\in UC(X)$.
\begin{enumerate}
\item[(i)] Let $x\in Sh^{+}(f)$. For given $\epsilon > 0$, choose $0 < \delta < \epsilon$ such that every $\delta$-pseudo orbit for $f$ can be $\frac{\epsilon}{2}$-traced through $f$ by some point of $X$. Choose $N\in \mathbb{N}^{+}$ such that $d(f_{n}^{m}(y), f^{m}(y)) < \frac{\delta}{2}$, for all $n\geq N$, for each $y\in X$ and for each $m \in \mathbb{N}^{+}$. Let us fix $k\geq N$ and let $\gamma = \lbrace x_{i}\rbrace_{i\in \mathbb{N}}$ be a $\frac{\delta}{2}$-pseudo orbit for $f_{k}$ through $x$. Since $d(f(x_{i}), x_{i+1}) \leq d(f(x_{i}), f_{k}(x_{i})) + d(f_{k}(x_{i}), x_{i+1}) < \delta$ for all $i\in \mathbb{N}$, we get that $\gamma$ is a $\delta$-pseudo orbit for $f$. Hence there exists a $z\in X$ which $\frac{\epsilon}{2}$-traces $\gamma$ through $f$. Therefore, $d(f_{k}^{i}(z), x_{i}) \leq d(f_{k}^{i}(z), f^{i}(z)) + d(f^{i}(z), x_{i}) < \frac{\epsilon}{2} + \frac{\epsilon}{2} =\epsilon$, for each $i\in \mathbb{N}$. Hence $\gamma$ can be $\epsilon$-traced through $f_{k}$. Since $k$ and $\gamma$ are chosen arbitrary, for all $n\geq N$, every $\frac{\delta}{2}$-pseudo orbit through $x$ for $f_{n}$ can be $\epsilon$-traced through $f_{n}$ implying $\frac{\delta}{2}\in \cap_{n\geq N}Sh^{+}(f_{n}, x, \epsilon)$ i.e. $\cup_{m\geq 1}\cap_{n\geq m}Sh^{+}(f_{n}, x, \epsilon)\neq \phi$. Since $\epsilon$ is chosen arbitrarily, $\cup_{m\geq 1}\cap_{n\geq m}Sh^{+}(f_{n}, x, \epsilon)\neq \phi$.
\medskip

Conversely, for given $\epsilon > 0$, suppose that $\cup_{m\geq 1}\cap_{n\geq m}Sh^{+}(f_{n}, x, \frac{\epsilon}{2})\neq \phi$. Choose $N\in \mathbb{N}^{+}$ and $\delta > 0$ such that for each $n\geq N$, every $\delta$-pseudo orbit through $x$ for $f_{n}$ can be $\frac{\epsilon}{2}$-traced through $f_{n}$ by some point of $X$.
Choose $M\geq N$ such that $d(f_{n}^{m}(y), f^{m}(y)) < \frac{\delta}{2}$, for all $n\geq M$, for each $y\in X$ and for each $m\in \mathbb{N}^{+}$. Let $\gamma = \lbrace x_{i}\rbrace_{i\in \mathbb{N}}$ be a $\frac{\delta}{2}$-pseudo orbit for $f$ through $x$. Clearly, $\gamma$ is a $\delta$-pseudo orbit for $f_{M}$ through $x$ and hence can be $\frac{\epsilon}{2}$-traced by some $z\in X$ through $f_{M}$. It is easy to check that, $\gamma$ can be $\epsilon$-traced by $z\in X$ through $f$. Hence $x\in Sh^{+}(f)$.
\medskip

\item[(ii)] Let $x\in Sp(f)$. For given $\epsilon > 0$, choose $M\in \mathbb{N}^{+}$ corresponding to $\frac{\epsilon}{3}$ as in the definition of specification point. Choose $N\in \mathbb{N}^{+}$ such that $d(f_{n}^{m}(y), f^{m}(y)) < \frac{\epsilon}{3}$, for all $n\geq N$, for each $y\in X$ and for each $m \in \mathbb{N}^{+}$. Choose a finite sequence $x = x_{1}, x_{2}, ..., x_{k}$ in $X$ and any set of integers $0\leq a_{1}\leq b_{1} < a_{2}\leq b_{2} < . . .< a_{k}\leq b_{k}$ with $a_{j} - b_{j-1} \geq M$, for all $1\leq j \leq k$. Choose $y\in X$ such that $d(f^{i}(y), f^{i}(x_{j})) < \frac{\epsilon}{3}$, for all $a_{j}\leq i\leq b_{j}$ and for all $1\leq j\leq k$. Note that, $d(f_{n}^{i}(y), f_{n}^{i}(x_{j})) < \epsilon$, for all $a_{j}\leq i\leq b_{j}$, for all $1\leq j\leq k$ and for all $n\geq N$. Hence $M\in \cap_{n\geq N}Sp(f_{n}, x, \epsilon)$ implying
$\cup_{m\geq 1}\cap_{n\geq m}Sp(f_{n}, x,  \epsilon)\neq \phi$. Since $\epsilon$ is chosen arbitrarily, we get the conclusion.

Conversely, for given $\epsilon > 0$, suppose that $\cup_{m\geq 1}\cap_{n\geq m}Sp(f_{n}, x, \frac{\epsilon}{3})\neq \phi$. Choose $N\in \mathbb{N}^{+}$ such that $d(f_{n}^{m}(y), f^{m}(y)) < \frac{\epsilon}{3}$, for all $n\geq N$, for each $y\in X$ and for each $m \in \mathbb{N}^{+}$. Choose $K, M\in \mathbb{N}^{+}$ such that for all $n\geq K$, any finite sequence $x = x_{1}, x_{2}, ..., x_{k}$ in $X$ and any set of integers $0\leq a_{1}\leq b_{1} < a_{2}\leq b_{2} < . . .< a_{k}\leq b_{k}$ with $a_{j} - b_{j-1} \geq M$, for all $1\leq j \leq k$, there exists $y_{n}\in X$ such that $d(f_{n}^{i}(y_{n}), f_{n}^{i}(x_{j})) < \frac{\epsilon}{3}$, for all $a_{j}\leq i\leq b_{j}$, for all $1\leq j\leq k$ and for all $n\geq K$. Set $Q = max\lbrace N, K\rbrace$. Note that, $d(f^{i}(y_{Q}), f^{i}(x_{j})) < \epsilon$, for all $a_{j}\leq i\leq b_{j}$ and for all $1\leq j\leq k$. Since $\epsilon$ is chosen arbitrarily, $x\in Sp(f)$.
\end{enumerate}
\end{proof}

\begin{Lemma}
Let $f\in UE(X)$ be expansive on a Mandelkern locally compact space $X$. If $x\in X$, then the following statements are equivalent:
\begin{enumerate}
\item[(i)] $x$ is a topologically stable point.
\item[(ii)] $x$ is a weak topologically stable point.
\item[(iii)] $x$ is an $\alpha$-persistent point.
\end{enumerate}
\label{T3.12}
\end{Lemma}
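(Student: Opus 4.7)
The plan is to establish the cyclic chain (i)$\Rightarrow$(ii)$\Rightarrow$(iii)$\Rightarrow$(i). The first two implications are essentially formal; the last is where expansivity and the Mandelkern hypothesis do real work.

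For (i)$\Rightarrow$(ii), I would iterate $f\circ h=h\circ g$ to obtain $f^n\circ h=h\circ g^n$ on $\overline{\mathcal{O}}_g(x)$. Since $g^n(y)\in\overline{\mathcal{O}}_g(x)$ whenever $y\in\overline{\mathcal{O}}_g(x)$, the weak estimate $d(f^n(h(y)),g^n(y))=d(h(g^n(y)),g^n(y))<\epsilon$ follows from the pointwise bound on $h$. For (ii)$\Rightarrow$(iii), I would apply weak topological stability with the same $\epsilon$ and take $h(x)$ as the candidate $\alpha$-persistence point; the inequality $d(f^n(h(x)),g^n(x))<\epsilon$ for every $n\in\mathbb{Z}$ is precisely what the definition of $\alpha$-persistence demands.

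For (iii)$\Rightarrow$(i), let $e$ be an expansivity constant for $f$. Given $\epsilon>0$, I would fix $\epsilon'<\min\{\epsilon,e/4\}$ and invoke $\alpha$-persistence at scale $\epsilon'$ to produce $\delta>0$ such that every $g\in UE(X)$ with $D(f,g)<\delta$ admits a $z\in X$ with $d(f^n(z),g^n(x))<\epsilon'$ for all $n\in\mathbb{Z}$. I then define $h$ on $\mathcal{O}_g(x)$ by $h(g^k(x)):=f^k(z)$. Well-definedness is forced by expansivity: if $g^j(x)=g^k(x)$, then $g^{j+n}(x)=g^{k+n}(x)$ for all $n$, so $d(f^n(f^j(z)),f^n(f^k(z)))\le 2\epsilon'<e$ for every $n\in\mathbb{Z}$, yielding $f^j(z)=f^k(z)$. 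The intertwining $f\circ h=h\circ g$ and the closeness $d(h(g^k(x)),g^k(x))<\epsilon'$ hold by construction.

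The main obstacle is extending $h$ continuously to $\overline{\mathcal{O}}_g(x)$. For this I plan to first establish a uniform expansivity lemma: for every compact $K\subset X$ and $\eta>0$ there is $N=N(K,\eta)$ such that any $a,b\in K$ with $d(f^m(a),f^m(b))\le e/2$ for all $|m|\le N$ must satisfy $d(a,b)<\eta$. The proof is a standard contradiction argument, extracting a convergent subsequence in $K\times K$ and using continuity of each $f^m$ (available for all $m\in\mathbb{Z}$ since $f\in UE(X)$) to produce two distinct points with $f$-orbits staying $e/2$-close forever, contradicting expansivity. Now if $g^{k_n}(x)\to y$, the bound $d(f^{k_n}(z),g^{k_n}(x))<\epsilon'$ places $\{f^{k_n}(z)\}$ in a closed ball of finite radius, which is compact by the Mandelkern hypothesis. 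Given $\eta>0$, pick $N$ from the lemma for this compact set; since only finitely many maps $g^m$ with $|m|\le N$ are involved and each is continuous at $y$, for $n,p$ large one has $d(g^{k_n+m}(x),g^{k_p+m}(x))<e/2-2\epsilon'$ for all such $m$, whence $d(f^{k_n+m}(z),f^{k_p+m}(z))\le e/2$ and the lemma forces $d(f^{k_n}(z),f^{k_p}(z))<\eta$. Thus $\{f^{k_n}(z)\}$ is Cauchy; completeness (automatic from the Mandelkern hypothesis) provides a limit, independent of the approximating sequence by an interleaving argument. Setting $h(y)$ equal to this limit, continuity of the extension follows from the same estimate, and the identities $f\circ h=h\circ g$ together with $d(h(y),y)\le\epsilon'<\epsilon$ pass to $\overline{\mathcal{O}}_g(x)$ by taking limits.
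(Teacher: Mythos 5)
Your proposal is correct and follows essentially the same route as the paper: the formal implications (i)$\Rightarrow$(ii)$\Rightarrow$(iii), a uniform-expansivity claim proved by a compactness argument inside a closed ball (exactly where the Mandelkern hypothesis enters), and for (iii)$\Rightarrow$(i) the construction of $h$ on $\mathcal{O}_{g}(x)$ from the $\alpha$-persistence tracing point followed by a continuous extension to $\overline{\mathcal{O}}_{g}(x)$ via completeness. The only difference is that you spell out the Cauchy-sequence and extension details that the paper delegates to the proof of \cite[Lemma 3.15]{KLMP}.
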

\begin{proof}
Let $f$ be expansive with expansivity constant $\mathfrak{c}$. We claim that, for each $x\in X$ and for each $\epsilon > 0$, there exists an $N = N(x, \epsilon)\in \mathbb{N}$ such that $\sup_{|n|\leq N}d(f^{n}(x), f^{n}(y))$ $\leq \mathfrak{c}$ implies that $d(x, y)< \epsilon$. Otherwise, choose a pair $x\in X$ and $\epsilon > 0$ such that for each $N\in \mathbb{N}$ there exists an $x_{N}\in X$ satisfying $\sup_{|n|\leq N}d(f^{n}(x), f^{n}(x_{N}))\leq \mathfrak{c}$ and $d(x, x_{N}) \geq \epsilon$. Since $B[x, \mathfrak{c}]$ is compact, we can assume that $x_{N}\rightarrow x'$, for some $x'\in X$. Therefore, $d(f^{n}(x), f^{n}(x'))\leq \mathfrak{c}$, for each $n\in \mathbb{Z}$ and $d(x, x')\geq \epsilon$, which contradicts the expansivity of $f$ and hence the claim follows. 
Now, proof of (i)$\Rightarrow$(ii) and (ii)$\Rightarrow$(iii) follows from corresponding definitions. Recall that, if $Y$ and $Z$ are metric spaces such that $Z$ is complete, $S$ is dense in $Y$ and $k: S\rightarrow Z$ is continuous, then $k$ can be extended to a continuous function $K: Y\rightarrow Z$. 
Since every Mandelkern locally compact space is complete, one can use the above claim and follow similar steps as in the proof of \cite[Lemma 3.15]{KLMP} to show that (iii) $\Rightarrow$ (i).
\end{proof}

\begin{theorem}
Let $\lbrace f_{n}\rbrace_{n\in \mathbb{N}^{+}}$ be a sequence of uniform equivalences in $X$ such that $f_{n}\xrightarrow{oc} f$ and $f_{n}^{-1}\xrightarrow{oc} f^{-1}$, where $f\in UE(X)$. Then, the following statements are true:
\begin{enumerate}
\item[(i)] $x\in Sh(f)$ if and only if for every $\epsilon > 0$, $\cup_{m\geq 1}\cap_{n\geq m}Sh(f_{n}, x, \epsilon)\neq \phi$
\item[(ii)] $x\in P_{\alpha}(f)$ if and only if for every $\epsilon > 0$, $\cup_{m\geq 1}\cap_{n\geq m} P_{\alpha}(f_{n}, x, \epsilon)\neq \phi$. 
\item[(iii)] $x\in Wts(f)$ if and only if for every $\epsilon > 0$, $\cup_{m\geq 1}\cap_{n\geq m} Wts(f_{n}, x, \epsilon)\neq \phi$. 
\item[(iv)] If $f$ is expansive and $X$ is Mandelkern locally compact, then $x\in Ts(f)$ if and only if for every $\epsilon > 0$, $\cup_{m\geq 1}\cap_{n\geq m} Wts(f_{n}, x, \epsilon)\neq \phi$.
\end{enumerate}
\label{T4.5}
\end{theorem}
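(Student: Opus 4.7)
The four parts share a common template: orbital convergence of both $f_n$ and $f_n^{-1}$ yields bi-infinite iterate control $d(f_n^k(y), f^k(y))<\eta$ uniformly in $y\in X$ for arbitrary $k\in\mathbb{Z}$ and $n$ large, and this is essentially the only analytic tool needed to transfer each of the four properties (two-sided shadowing, $\alpha$-persistence, weak topological stability, topological stability) between $f$ and a tail of $\lbrace f_n\rbrace$. The plan is to treat (i) as a two-sided analogue of Theorem \ref{T4.4}(i), handle (ii) and (iii) in parallel via the auxiliary fact $D(f_n,f)\to 0$, and finally reduce (iv) to (iii) by invoking Lemma \ref{T3.12}.

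For part (i), I would start from $x\in Sh(f)$, fix $\epsilon>0$, pick a shadowing gauge $\delta<\epsilon/2$ at $x$ for $f$, and use $f_n\xrightarrow{oc} f$ together with $f_n^{-1}\xrightarrow{oc} f^{-1}$ to choose $N$ so that $d(f_n^k(y), f^k(y))<\delta/2$ for every $k\in\mathbb{Z}$, every $y\in X$ and every $n\geq N$. A triangle-inequality computation then shows that any $\delta/2$-pseudo-orbit of $f_n$ through $x$ is a $\delta$-pseudo-orbit of $f$ through $x$; the shadowing point supplied by $Sh(f)$ then $\epsilon$-traces the original pseudo-orbit through $f_n$ as well, giving $\delta/2\in \cap_{n\geq N} Sh(f_n, x, \epsilon)$. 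The converse swaps the roles of $f$ and $f_n$ in the same bookkeeping.

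For parts (ii) and (iii), the key observation is that orbital convergence specialized to $k=1$ forces $\sup_{y\in X} d(f_n(y), f(y))\to 0$, hence $D(f_n,f)\to 0$ in the bounded sup-metric. Given $x\in P_\alpha(f)$ (respectively $x\in Wts(f)$) and $\epsilon>0$, I would pick $\delta'$ from the defining property of $x$ applied to $\epsilon/3$, then choose $N$ large enough so that both $D(f_n,f)<\delta'/2$ and $d(f_n^k(y), f^k(y))<\epsilon/3$ hold for all $k\in\mathbb{Z}$, $y\in X$ and $n\geq N$, and set $\delta=\delta'/2$. If $g\in UE(X)$ satisfies $D(f_n,g)<\delta$, then $D(f,g)\leq D(f,f_n)+D(f_n,g)<\delta'$, so the defining property of $x$ under $f$ supplies a tracking point $y$ (respectively a continuous map $h:\overline{\mathcal{O}}_g(x)\to X$, whose continuity is intrinsic to $h$ and hence reusable) realizing the required estimates within $\epsilon/3$; the bi-infinite iterate estimate then upgrades these to estimates within $\epsilon$ with $f$ replaced by $f_n$. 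The converses run by interchanging $f$ and $f_n$ and reusing the same two triangle inequalities.

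Part (iv) is then formal: by Lemma \ref{T3.12}, expansivity of $f$ on the Mandelkern locally compact space $X$ gives $Ts(f)=Wts(f)=P_\alpha(f)$, so $x\in Ts(f)$ is equivalent to $x\in Wts(f)$, and the latter is characterized by part (iii). The main obstacle across all four parts is the bi-infinite iterate control — this is precisely why the hypothesis requires $f_n^{-1}\xrightarrow{oc} f^{-1}$ in addition to $f_n\xrightarrow{oc} f$ — and one must also take $\delta<1$ when converting a $D$-bound (written with the bounded metric $\overline{d}$) into a genuine $d$-bound inside the pseudo-orbit and tracing inequalities.
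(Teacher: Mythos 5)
Your proposal is correct and follows essentially the same route as the paper: part (i) as a two-sided version of the pseudo-orbit transfer in Theorem \ref{T4.4}(i), parts (ii) and (iii) via the observation that orbital convergence at $k=\pm 1$ controls $D(f_n,f)$ so that perturbations of a tail of $f_n$ are perturbations of $f$ (and vice versa) with the bi-infinite iterate estimate upgrading the tracking bounds, and part (iv) as a formal consequence of Lemma \ref{T3.12} applied to (iii). The only differences are cosmetic ($\epsilon/3$ versus the paper's $\epsilon/2$ splits, and your explicit remark about the bounded metric, which the paper handles by fixing $0<\epsilon<1$).
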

\begin{proof}
Suppose that $\lbrace f_{n}\rbrace_{n\in \mathbb{N}^{+}}$ is a sequence of uniform equivalences in $X$ such that $f_{n}\xrightarrow{oc} f$ and $f_{n}^{-1}\xrightarrow{oc} f^{-1}$, where $f$ is a uniform equivalence on $X$.
\begin{enumerate}
\item[(i)] Proof is similar to the proof of Theorem \ref{T4.4}(i).
\medskip

\item[(ii)] Let $x\in P_{\alpha}(f)$ and fix $0 < \epsilon < 1$. Choose $0 < \delta < \epsilon$, such that $\delta$ corresponds to $\frac{\epsilon}{2}$ as in the definition of $\alpha$-persistent points. Since $f_{n}\xrightarrow{oc} f$ and $f_{n}^{-1}\xrightarrow{oc} f^{-1}$, choose $N\in \mathbb{N}^{+}$ such that $d(f^{i}_{n}(y), f^{i}(y)) < \frac{\delta}{2}$ and $d(f^{-i}_{n}(y), f^{-i}(y)) < \frac{\delta}{2}$, for all $n\geq N$, for each $y\in X$ and for each $i\in \mathbb{N}^{+}$. Fix $k \geq N$ and choose a uniform equivalence $g_{k}$, such that $D(f_{k}, g_{k}) < \frac{\delta}{2}$. Clearly, $D(f, g_{k}) < \delta$ and hence there is a $y\in X$ such that $d(f^{n}(y), g_{k}^{n}(x)) < \frac{\epsilon}{2}$, for each $n\in \mathbb{Z}$. Note that, $d(f_{k}^{n}(y), g_{k}^{n}(x)) < \epsilon$, for each $n\in \mathbb{Z}$. Since $k$ is chosen arbitrarily, $\cap_{n\geq N} P_{\alpha}(f_{n}, x, \epsilon)\neq \phi$ implying $\cup_{m\geq 1}\cap_{n\geq m} P_{\alpha}(f_{n}, x, \epsilon)\neq \phi$. Since $\epsilon$ is chosen arbitrary, we get the conclusion.
\medskip

Conversely, fix $0 < \epsilon < 1$. Choose $N\in \mathbb{N}^{+}$ and $0 < \delta < \epsilon$ such that $\delta\in \cap_{n\geq N} P_{\alpha}(f_{n}, x, \frac{\epsilon}{2})$. Since $f_{n}\xrightarrow{oc} f$ and $f_{n}^{-1}\xrightarrow{oc} f^{-1}$, fix $M\geq N$ such that $d(f^{i}_{n}(y), f^{i}(y)) < \frac{\delta}{2}$ and $d(f^{-i}_{n}(y), f^{-i}(y)) < \frac{\delta}{2}$, for all $n\geq M$, for each $y\in X$ and for each $i\in \mathbb{N}^{+}$. Choose a uniform equivalence $g$, such that $D(f, g) < \frac{\delta}{2}$. Clearly $D(f_{M}, g) < \delta$ and hence there is a $y\in X$ such that $d(f_{M}^{n}(y), g^{n}(x)) < \frac{\epsilon}{2}$, for each $n\in \mathbb{Z}$. Note that, $d(f^{n}(y), g^{n}(x)) < \epsilon$, for each $n\in \mathbb{Z}$. Since $\epsilon$ is chosen arbitrary, $x\in P_{\alpha}(x)$.
\medskip

\item[(iii)] Proof is similar to the proof of (ii).
\medskip

\item[(iv)] Using Lemma \ref{T3.12} and (iii), we get the result.
\end{enumerate}
\end{proof}

\section{Examples}
We now discuss the topological nature of the set of all points with particular dynamical property like expansivity, sensitivity, transitivity etc. 
Particularly, the next example shows that the set of all expansive, positively expansive and sensitive points are neither open nor closed in general and the set of all points with dense set of periodic points in its neighbourhood, topologically transitive and topologically mixing points need not be an open set in general.
\begin{Example}
Let $X = [0, 1]$ be equipped with the Euclidean metric. For each $k\in \mathbb{N}^{+}$, $P_{k}$ denotes the mid-point of $[\frac{1}{k+1},\frac{1}{k}]$, $Q_{k}$ denotes the mid point of $[\frac{1}{k+1},P_{k}]$ and $R_{k}$ denotes the mid point of $[P_{k},\frac{1}{k}]$. Consider piecewise linear maps $f,g\in UE(X)$ defined as follows: 
\begin{center}
$f(0) = 0$, $f(\frac{1}{n}) = \frac{1}{n}$, $f(P_{n}) = Q_{n}$, for each $n\in\mathbb{N}^{+}$\\ 
\vspace*{0.2cm}
$g(0) = 0$, $g(\frac{1}{n}) = \frac{1}{n}$, $g(P_{2n}) = R_{2n}$, $g(P_{2n-1}) = Q_{2n-1}$ for each $n\in\mathbb{N}^{+}$
\end{center}
\vspace*{0.1cm}
Define a homeomorphism $h$ on $X$ by $h(x) = x^{2}$ for each $x\in X$. Then,
\begin{enumerate}
\item $E^{+}(g) = \lbrace \frac{1}{2n-1} : n\in \mathbb{N}^{+}\rbrace$ and $E(f) = \lbrace \frac{1}{n} : n\in \mathbb{N}^{+}\rbrace = Se(f)$. 
\item Since $0\notin E(f), Se(f), E^{+}(g)$, set of all expansive points, positively expansive points and sensitive points are neither closed nor open in general.
\item Since $P(f) = \lbrace 0 \rbrace$, $P(f)$ need not be an open set in general. 
\item Since $0\in P(f)\setminus Tt(f)$, Theorem \ref{T2.1} is not true if point is not transitive.
\item Since $Tt(h) = \lbrace 1\rbrace = Tm(h)$, set of all topologically transitive points and topologically mixing points need not be open in general.
\end{enumerate}
\label{E3.1}
\end{Example}

\begin{Remark}
For any map $f\in UC(X)$, $P(f)$, $Tt(f)$ and $Tm(f)$ are closed sets.
\label{E3.2}
\end{Remark}

Through Example \ref{E3.3} - Example \ref{E3.10}, we show that the expansive, positively expansive, sensitive, denseness of periodic points in its neighbourhood, topologically transitive, topologically mixing, shadowing and positive shadowing nature of a point under a sequence of function can not be transfer to its uniform limit. Through Example \ref{E3.13}, we show that the topological stability, weak topological stability and $\alpha$-persistence nature of a point under a sequence of functions can not be transfer to its pointwise limit.
 
\begin{Example}
Let $X =[0, 1]$ be equipped with the Euclidean metric. Consider a sequence $\lbrace y_{n}\rbrace_{n\in \mathbb{N}^{+}}$, where $y_{1} = \frac{3}{4}$ and $y_{n} = \frac{y_{n-1}+1}{2}$, for all $n\geq 2$. For each $n\in \mathbb{N}^{+}$, consider piecewise linear maps $f_{n}\in UE(X)$ defined as follows: 
\begin{align*}
f_{n}(0) =  0,\hspace*{0.1cm} f_{n}(y_{n}) = y_{n+1},\hspace*{0.1cm} f_{n}(1) = 1 
\end{align*}
It is easy to check that, $E(f_{n}) = \lbrace 0, 1\rbrace$, $E^{+}(f_{n}) = \lbrace 0 \rbrace = Se(f)$ and $Sh(f_{n}) = X$, for each $n\in \mathbb{N}^{+}$. Since $f_{n}\xrightarrow[]{uc} I_{X}$, where $I_{X}$ is the identity map on $X$, the nature of expansivity, positive expansivity, sensitivity and shadowing of a point can not be transfer to uniform limits. 
\label{E3.3}
\end{Example}

\begin{Example}
Let $X =[0, 1]$ be equipped with the Euclidean metric. For each $n\in \mathbb{N}^{+}$, consider piecewise linear maps $f_{n}\in UE(X)$ and $f\in UE(X)$ defined as follows: 
\begin{align*}
f_{n}(0) =  0,\hspace*{0.1cm} f_{n}(\frac{1}{n+1}) = \frac{1}{n+1},\hspace*{0.1cm} f_{n}(\frac{3}{4}) = \frac{7}{8},\hspace*{0.1cm} f_{n}(1) = 1 
\end{align*}
\begin{align*}
f(0) = 0, \hspace*{0.1cm}f(\frac{3}{4}) = \frac{7}{8}, \hspace*{0.1cm}f(1) =1
\end{align*}
It is easy to check that, $E(f_{n}) = \lbrace 1\rbrace$, $E^{+}(f_{n}) = \phi$ and $P(f_{n}) = [0, \frac{1}{n+1}]$, for each $n\in \mathbb{N}^{+}$. Since $f_{n}\xrightarrow[]{uc} f$, $E(f) = \lbrace 0, 1\rbrace$, $E^{+}(f) = \lbrace 0\rbrace$ and $P(f) = \phi$. Sequence of functions $f_{n}$ are neither expansive at $x = 0$ nor positively expansive at $x = 1$ but its limit $f$ is expansive at $x = 0$ and positively expansive at $x = 1$. Also, $0\notin Sh(f_{n})$, for each $n\in \mathbb{N}$ but $0\in Sh(f)$. Thus, a point which is not an expansive (positively expansive, shadowable) point of a sequence of functions can be an expansive (positively expansive, shadowable) point of its uniform limit. Since $0\in P(f_{n})$, for all $n\geq 1$ but $0\notin P(f)$, the nature of denseness of periodic points in a neighbourhood of a point can not be transfer to uniform limits.
\label{EE3.3}
\end{Example}

\begin{Example}
Let $X = [0, 1]$ be equipped with the Euclidean metric. For each $k\in \mathbb{N}^{+}$, $P_{k}$ denotes the mid-point of $[0,\frac{1}{k+1}]$ and $Q_{k}$ denotes the mid point of $[0, P_{k}]$. For each $n\in \mathbb{N}^{+}$, consider piecewise linear maps $f_{n}\in UE(X)$  and $f\in UE(X)$ defined as follows: 
\begin{align*}
f_{n}(0) = 0,\hspace*{0.1cm} f_{n}(P_{n}) = Q_{n},\hspace*{0.1cm} f_{n}(\frac{1}{n+1}) = \frac{1}{n+1},\hspace*{0.1cm} f_{n}(\frac{3}{4}) = \frac{7}{8},\hspace*{0.1cm} f_{n}(1) = 1
\end{align*}
\begin{align*}
f(0) = 0 ,\hspace*{0.1cm} f(\frac{3}{4}) = \frac{7}{8}, \hspace*{0.1cm}f(1) =1
\end{align*}
It is easy to check that, $Se(f_{n}) = \lbrace \frac{1}{n+1}\rbrace$ and $Se(f) = \lbrace 0\rbrace $, for each $n\in \mathbb{N}^{+}$. Clearly, $f_{n}\xrightarrow[]{uc} f$. Sequence of functions $f_{n}$ is not sensitive at $x = 0$ but its uniform limit $f$ is sensitive at $x = 0$. Thus, a point which is not a sensitive point of a sequence of functions can be a sensitive point of its uniform limit. 
\label{E3.4}
\end{Example}

\begin{Example}
Let $\alpha_{n}$ be a strictly increasing sequence of positive irrationals converges to $1$ and let $\beta_{n}$ be a  strictly increasing sequence of positive rationals converges to $\frac{1}{\sqrt{2}}$. Define $f_{n}, g_{n}$ and $g$ on $\mathbb{S}^{1}$ by $f_{n}(x) = xe^{2\pi i \alpha_{n}}$, $g_{n}(x) = xe^{2\pi i \beta_{n}}$ and $g(x) = xe^{2\pi i\frac{1}{\sqrt{2}}}$ for each $x\in \mathbb{S}^{1}$ and for each $n\in \mathbb{N}^{+}$. Clearly, $f_{n}\xrightarrow[]{uc} I_{\mathbb{S}^{1}}$, where $I_{S^{1}}$ is the identity map on $\mathbb{S}^{1}$ and $g_{n}\xrightarrow[]{uc} g$. Note that, $Tt(f_{n}) = X = Tt(g)$ and $Tt(f) = \phi = Tt(g_{n})$, for each $n\in \mathbb{N}^{+}$. Therefore, transitivity and mixing nature of a point can not be transfer to uniform limits and a point which is not a transitive (mixing) point of a sequence of functions can be a transitive (mixing) point of its uniform limit.
\label{E3.5}
\end{Example}

\begin{Example}
Let $X = [0, 1]$ be equipped with the Euclidean metric. For each $n\in \mathbb{N}^{+}$, consider piecewise linear maps $f_{n}\in UC(X)$  and $f\in UE(X)$ defined as follows:
\begin{align*}
f_{n}(0) = 0,\hspace*{0.1cm} f_{n}(\frac{1}{2}) = \frac{1}{4},\hspace*{0.1cm} f_{n}(1) = \frac{n}{n+1}
\end{align*}
\begin{align*}
f(0) = 0 ,\hspace*{0.1cm} f(\frac{1}{2}) = \frac{1}{4}, \hspace*{0.1cm}f(1) =1
\end{align*}
Clearly, $f_{n}\xrightarrow[]{uc} f$. Since $Tt(f_{n}) = \phi = Tm(f_{n})$ and $Tt(f) = \lbrace 1 \rbrace = Tm(f)$, a point which is not a transitive (mixing) point of a sequence of functions can be a transitive (mixing) point of its uniform limit.
\label{E3.6}  
\end{Example}

\begin{Example}
Let $X = [0, 1]$ be equipped with the Euclidean metric. For each $n\in \mathbb{N}^{+}$, consider piecewise linear maps $f_{n}\in UC(X)$ defined as follows: 
\begin{align*}
f_{n}(0) = 0,\hspace*{0.1cm} f_{n}(\frac{n}{n+1}) = 1,\hspace*{0.1cm} f_{n}(1) = 1
\end{align*}
Clearly $f_{n}\xrightarrow[]{uc} I_{X}$, where $I_{X}$ is the identity map on $X$. Using Theorem \ref{T2.3}, we get $Sh^{+}(f_{n}) = X$ but $Sh^{+}(I_{X}) = \phi$. Hence, positive shadowable nature of a point can not be transfer to uniform limits.
\label{E3.9}  
\end{Example}

\begin{Example}
Let $X = [0, 1]$ be equipped with the Euclidean metric. For each $n\in \mathbb{N}^{+}$, consider piecewise linear maps $f_{n}\in UC(X)$ and $f\in UE(X)$ defined as follows: 
\begin{align*}
f_{n}(0) =\frac{1}{2(n+1)},\hspace*{0.1cm} f_{n}(\frac{1}{n+1}) = \frac{1}{n+1},\hspace*{0.1cm} f_{n}(\frac{3}{4}) = \frac{7}{8},\hspace*{0.1cm} f_{n}(1) = 1
\end{align*}
\begin{align*}
f(0) = 0 ,\hspace*{0.1cm} f(\frac{3}{4}) = \frac{7}{8}, \hspace*{0.1cm} f(1) = 1
\end{align*}
Clearly, $f_{n}\xrightarrow[]{uc} f$. From \cite[Example 4.2]{CLS} and Theorem \ref{T2.3}, we get $Sh^{+}(f_{n}) = \phi$ and $Sh^{+}(f) = X$. Thus, a point which is not a positive shadowable point of a sequence of functions can be a positive shadowable point of its uniform limit.
\label{E3.10}  
\end{Example}

\begin{Proposition}
Let $f\in UC(X)$ be surjective map on an unbounded metric space $X$. If $f$ is equicontinuous, then $Sp(f) = \phi$.
\label{T3.7} 
\end{Proposition}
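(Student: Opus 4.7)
The plan is a proof by contradiction, exploiting the tension between equicontinuity, which forces nearby orbits to stay close at \emph{every} iterate, and specification, which demands approximate interpolation between \emph{arbitrarily separated} target orbit segments. I will suppose, toward a contradiction, that some $x\in Sp(f)$, and then produce a target $x_2$ whose $M$-th image is too far from $f^M(x)$ to be reconciled with a single tracing point $y$.

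First I calibrate the two relevant moduli. Using equicontinuity of the family $\{f^n\}_{n\in\mathbb{N}}$ with tolerance $1$, I fix $\delta>0$ such that $d(a,b)<\delta$ implies $d(f^n(a),f^n(b))<1$ for every $n\in\mathbb{N}$. Then I invoke the specification property at $x$ with $\epsilon=\delta/2$ to obtain the associated integer $M=M^{f}_{x}(\delta/2)$.

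Next I bring in surjectivity and unboundedness. Because $f$ is surjective, so is each iterate, and in particular $f^M(X)=X$. Since $X$ is unbounded, I may choose $x_2\in X$ with $d(f^M(x),f^M(x_2))>1+\delta/2$. Applying the specification condition at $x$ with $k=2$, $x_1=x$, $a_1=b_1=0$, and $a_2=b_2=M$ (so that $a_2-b_1=M\geq M$) yields $y\in X$ with $d(y,x)<\delta/2<\delta$ and $d(f^M(y),f^M(x_2))<\delta/2$. The calibration of $\delta$ gives $d(f^M(y),f^M(x))<1$, and the triangle inequality produces $d(f^M(x),f^M(x_2))<1+\delta/2$, contradicting the choice of $x_2$. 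Hence no such $x$ exists and $Sp(f)=\phi$.

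The only real subtlety is keeping the index conventions in the definition of specification point straight (making sure the chosen $a_j$ and $b_j$ satisfy the gap condition $a_j-b_{j-1}\geq M$) and balancing the two constants so that the tracing error $\delta/2$ and the equicontinuity error $1$ combine to beat the a priori separation I arranged. Once the quantifiers are set up, the contradiction is immediate: equicontinuity shackles the $M$-th iterate of any point near $x$ to a bounded neighbourhood of $f^M(x)$, while $f^M(X)=X$ unbounded furnishes targets outside that neighbourhood, so specification at $x$ is impossible.
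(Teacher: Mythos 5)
Your proof is correct and follows essentially the same route as the paper's: calibrate a uniform equicontinuity modulus for the family of iterates, use surjectivity plus unboundedness to place a target point whose $M$-th image is far from $f^M(x)$, and apply specification with the two blocks $a_1=b_1=0$, $a_2=b_2=M$ to reach a contradiction via the triangle inequality. The only cosmetic difference is your choice of constants ($1$ and $\delta/2$ versus the paper's $\epsilon$ and $\delta<\epsilon$ with a $2\epsilon$ separation).
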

\begin{proof}
Let $\epsilon > 0$. Being an unbounded metric space, $X$ can not be cover by finitely many balls of radius $\epsilon$. By equicontinuity of $f$, choose $0 < \delta < \epsilon$ such that $d(x, y) < \delta$ implies $d(f^{n}(x), f^{n}(y)) < \epsilon$, for all $x, y\in X$ and for each $n\in \mathbb{N}$. Let $x\in Sp(f)$ and fix $N = M_{x}^{f}(\delta)\in \mathbb{N}^{+}$. Choose a sequence $0=a_{1} = b_{1} < a_{2} = N =b_{2}$ and $y_{N}\in X$ satisfying $d(f^{N}(x), f^{N}(y_{N})) > 2\epsilon$. By specification at $x$, there exists a $z\in X$ satisfying $d(x, z) < \delta$ and $d(f^{N}(z), f^{N}(y_{N})) < \delta$. Hence $d(f^{N}(x), f^{N}(y_{N})) < 2\epsilon$, a contradiction.
\end{proof}

\begin{Example}
Let $X = \mathbb{R}$ and $Y = [0, 1)$ be equipped with the Euclidean metric. Define sequence of maps $g_{n}: X\rightarrow X$ by $g_{n} = \frac{1}{2n}x$, for each $x\in X$ and for each $n\in \mathbb{N}^{+}$. Define $h_{n}: Y\rightarrow Y$ by $h_{n}(y) = y^{n}$, for each $y\in Y$ and for each $n\in \mathbb{N}^{+}$.
\begin{enumerate}
\item Clearly, $g_{n}\xrightarrow[]{pc} g$ where $g(x) = 0$, for each $x\in X$ and $h_{n}\xrightarrow[]{pc} h$, where $h(y) = 0$, for each $y\in Y$.
\item From Proposition \ref{T3.7}, $Sp(g_{n}) = \phi$, for each $n\in \mathbb{N}^{+}$ but $Sp(g) = X$. Thus, a point which is not a specification point of a sequence of functions can be a specification point of its pointwise limit. 
\item Since $Sp(g) = X$, we can not drop surjectivity in Proposition \ref{T3.7}.
\item Since $Tm(h_{n}) =\phi$, for all $n\geq 0$ and every specification point of a continuous surjective map $f$ on $X$ is a topologically mixing point \cite[Theorem 4.5]{DKDM}, we get $Sp(h_{n}) =\phi$, for all $n\geq 0$. Since $h$ has specification property, even on a bounded space, a point which is not a specification point of a sequence of functions can be a specification point of its pointwise limit.
\end{enumerate}
\label{E3.8}
\end{Example}

\begin{Proposition}
Let $X = \mathbb{R}$ be equipped with the Euclidean metric. For every $f\in UE(X)$, $Wts(f)\subset Sh(f)\subset P_{\alpha}(f)$. 
\label{T3.11}
\end{Proposition}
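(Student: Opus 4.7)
The plan is to prove the two inclusions separately: the second inclusion $Sh(f)\subset P_{\alpha}(f)$ is the easy part and holds in considerable generality, while the first inclusion $Wts(f)\subset Sh(f)$ requires a realisation argument that exploits the one-dimensional topology of $\mathbb{R}$.

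For $Sh(f)\subset P_{\alpha}(f)$, fix $\epsilon>0$ and $x\in Sh(f)$, and choose $\delta\in(0,\min\{\epsilon,1\})$ such that every $\delta$-pseudo orbit of $f$ through $x$ is $\epsilon$-traced. For any $g\in UE(\mathbb{R})$ with $D(f,g)<\delta$, the inequality $\delta<1$ makes the bounded metric $\overline{d}$ agree with $d$ on the pairs $(f(y),g(y))$, so $|f(y)-g(y)|<\delta$ for every $y\in\mathbb{R}$; in particular $\{g^n(x)\}_{n\in\mathbb{Z}}$ is a $\delta$-pseudo orbit of $f$ through $x$, and shadowability produces $y\in\mathbb{R}$ with $|f^n(y)-g^n(x)|<\epsilon$ for all $n\in\mathbb{Z}$. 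This is exactly the $\alpha$-persistence condition at $x$.

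For $Wts(f)\subset Sh(f)$, fix $\epsilon>0$ and $x\in Wts(f)$, and take $\delta=\delta_{x}^{f}(\epsilon/2)$ from the definition of weak topological stability. The plan is to find $\delta'>0$ such that every $\delta'$-pseudo orbit $\{x_i\}_{i\in\mathbb{Z}}$ of $f$ through $x$ can be realised exactly as the orbit of $x$ under some $g\in UE(\mathbb{R})$ with $D(f,g)<\delta$, i.e., $g^{i}(x)=x_i$ for every $i\in\mathbb{Z}$. Once such a $g$ is produced, weak topological stability furnishes a continuous $h:\overline{\mathcal{O}}_{g}(x)\to\mathbb{R}$ with $|f^{n}(h(y))-g^{n}(y)|<\epsilon/2$ for every $y\in\overline{\mathcal{O}}_{g}(x)$ and every $n\in\mathbb{Z}$; specialising to $y=x$ gives $|f^{n}(h(x))-x_n|<\epsilon/2<\epsilon$ for all $n$, so $h(x)$ is an $\epsilon$-tracing point of the pseudo orbit and $x\in Sh(f)$.

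The crux of the argument is the realisation step, and this is where the main obstacle lies: every element of $UE(\mathbb{R})$ is a strictly monotone homeomorphism of $\mathbb{R}$, so the prescribed assignment $x_i\mapsto x_{i+1}$ must be order-compatible with a global monotone homeomorphism close to $f$. Assuming without loss of generality that $f$ is strictly increasing, I would build $g$ as a piecewise-linear modification of $f$: on pairwise-disjoint small intervals $I_i$ chosen around each $x_i$, replace $f|_{I_i}$ by a piecewise-linear strictly monotone map sending $x_i$ to $x_{i+1}$ and agreeing with $f$ at the endpoints of $I_i$, and set $g=f$ elsewhere. Uniform continuity of $f$ and $f^{-1}$ bounds the $C^{0}$-magnitude of each correction by a constant multiple of $\delta'$, so taking $\delta'$ small enough forces $D(f,g)<\delta$. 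The hard point is verifying that the $I_i$ can be taken pairwise-disjoint and that every local correction preserves strict monotonicity; this is clean when the $f$-orbit of $x$ has no accumulation, and in the presence of accumulation one argues locally using the uniform continuity of $f^{-1}$ to control the separation of $f(x_i)$ and $f(x_j)$ in terms of $|x_i-x_j|$, shrinking $\delta'$ further to overcome any loss of order-compatibility.
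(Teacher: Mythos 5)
Your second inclusion $Sh(f)\subset P_{\alpha}(f)$ is correct and is exactly the argument the paper invokes via Theorem \ref{T2.7}: the $g$-orbit of $x$ is a $\delta$-pseudo orbit of $f$ through $x$, and any tracing point witnesses $\alpha$-persistence. Your overall strategy for $Wts(f)\subset Sh(f)$ --- realise a pseudo orbit through $x$ as a genuine orbit of some $g\in UE(\mathbb{R})$ that is $D$-close to $f$, then apply weak topological stability at $x$ and evaluate the conjugating map at $x$ --- is also the paper's strategy, which adapts the proof of Theorem \ref{T2.6} using three lemmas imported from \cite{WO}.

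The gap is in your realisation step, and it cannot be repaired by shrinking $\delta'$. First, a $\delta'$-pseudo orbit $\{x_i\}_{i\in\mathbb{Z}}$ may contain repeated points with different successors ($x_i=x_j$ but $x_{i+1}\neq x_{j+1}$), in which case \emph{no} function $g$ whatsoever satisfies $g^{i}(x)=x_i$ for all $i$; exact realisation of the given pseudo orbit is impossible. Second, even with distinct points, order-compatibility fails robustly: for every $\delta'>0$ there are pseudo orbits with $|f(x_i)-f(x_j)|$ far smaller than $\delta'$ and with $x_{i+1},x_{j+1}$ in the order opposite to $x_i,x_j$; since every element of $UE(\mathbb{R})$ is globally monotone, no admissible $g$ realises such an assignment. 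This is a defect of the pseudo orbit you are handed, not something you control through $\delta'$. The paper's remedy is precisely the machinery you omit: reduce to \emph{finite} pseudo orbits (statement (i), Walters' Lemma 8, which on $\mathbb{R}$ uses Mandelkern local compactness to extract a limit of tracing points of longer and longer segments), \emph{perturb} the finite segment to a nearby one with pairwise distinct points (statement (ii), Walters' Lemma 9), realise the perturbed segment by a uniform equivalence close to $f$ (statement (iii), Walters' Lemma 10), and note that a point tracing the perturbed segment traces the original with a slightly worse constant. Your disjoint-intervals construction is a reasonable substitute for the last of these steps on a finite, already-perturbed segment, but applied to the raw bi-infinite pseudo orbit it does not go through.
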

\begin{proof}
Using the fact that $X$ is a Mandelkern locally compact metric space without isolated points, one can follow steps as in the proofs of \cite[Lemma 8]{WO}, \cite[Lemma 9]{WO} and \cite[Lemma 10]{WO} to prove statements (i), (ii) and (iii) respectively:
\begin{enumerate}
\item[(i)] Suppose that $f$ has the following property: for each $\epsilon > 0$, there exists a $\delta > 0$, such that if every finite sequence $\lbrace x_{0}$. . . $x_{k}\rbrace$ of points of $X$ satisfies $d(T(x_{n}, x_{n+l}) < \delta$, for all $0\leq n\leq k-1$, then there exists a $x\in X$ with $d(T^{n}(x), x_{n}) < \epsilon$, for all $0\leq n \leq k-1$. Then, $f$ has the shadowing property.
\item[(ii)] Let $k\geq 0$ be an integer. Let $\alpha > 0$ and $\eta > 0$ be given. Then for any set of points $\lbrace x_0, x_l,$. . . $,x_k\rbrace$ with $d(T(x_i),x_{i+1}) < \alpha$ for all $0\leq i\leq k-1$, there exists a set of points $\lbrace x'_{0},x'_{1},$. . .$x'_{i}\rbrace$ such that $d(x_i,x'_{i}) < \eta$ for all $0\leq  i \leq k$, $d(T(x'_{i}),x'_{i+1}) < 2\alpha$  for all $0\leq i \leq k-1$ and $x'_{i}\neq x'_{j}$, if $i\neq j$ for all $0\leq i\leq k$ and for all $0\leq j \leq k$.
\item[(iii)] For any finite collection $\lbrace (p_{i},q_{i})\in X\times X : i = 1,...,m\rbrace$ specified together with $0 < \delta < \frac{1}{2\pi}$ such that $d(p_{i},q_{i}) < \delta$, for all $1\leq i\leq m$, $p_{i}\neq p_{j}$ and $q_{i}\neq q_{j}$ whenever $i\neq j$, there exists a uniform equivalence $f$ on $X$ such that $D(f,id) < 2\pi \delta$ and $f(p_{i}) = q_{i}$, for all $1\leq i\leq m$. 
\end{enumerate}
Using similar steps as in the proof of Theorem \ref{T2.6}, one can show that $Wts(f) \subset Sh(f)$. One can follow steps as in the proof of Theorem \ref{T2.7} to obtain the last inclusion.
\end{proof}

\begin{Example}
Let $X = \mathbb{R}$ be equipped with the Euclidean metric. Define a sequence of maps $f_{n} = \frac{n+2}{n+1}x$, for each $x\in X$ and for each $n\in \mathbb{N}^{+}$. Clearly, $f_{n}\xrightarrow[]{pc} I_{X}$, the identity map on $X$. From Proposition \ref{T3.11} and Lemma \ref{T3.12}, we get that $Sh(f_{n}) = Ts(f_{n}) = X = Wts(f_{n}) = P_{\alpha}(f_{n})$, for each $n\in \mathbb{N}^{+}$. Since $Sh(Id_{X}) = Ts(Id_{X}) = \phi = Wts(Id_{X}) = P_{\alpha}(Id_{X})$, topologically stable, weak topologically stable and $\alpha$-persistent nature of a point can not be transfer to pointwise limits.
\label{E3.13}
\end{Example}

\textbf{Acknowledgements:} The first author is supported by CSIR-Junior Research Fellowship (File No.-09/045(1558)/ 2018-EMR-I) of  Government of India.


\begin{thebibliography}{11}
\bibitem{AAU} Abu-Saris, R., Al-Hami, K.: Uniform convergence and chaotic behavior. Nonlinear Analysis: Theory, Methods \& Applications. 65(4), 933-937(2006)
\bibitem{AMPE} Abu-Saris, R.M., Martinez-Gimenez, F., Peris A.: Erratum to “Uniform convergence and chaotic behavior”[Nonlinear Anal. TMA 65 (4)(2006) 933–937]. Nonlinear Analysis: Theory, Methods \& Applications. 68(5), 1406-1407(2008) 
\bibitem{AO} Akin, E.: On chain continuity. Discrete \& Continuous Dynamical Systems-A. 2(1), 111-120(1996) 
\bibitem{CLS} Chen, L., Li, S.H.: Shadowing property for inverse limit spaces. Proc. Amer. Math. Soc. 115(2), 573-580(1992) 
\bibitem{DKDM} Das, P., Khan, A.G., Das, T., Measure Expansivity and Specification for Pointwise Dynamics. Bulletin of the Brazilian Mathematical Society, New Series(2019). https://doi.org /10.1007/s00574-019-00134-3. 
\bibitem{FDA} Fedeli, A., Donne A.L.: A note on the uniform limit of transitive dynamical systems. Bull. Belgian Math. Soc.-Simon Stevin. 16(1), 59-66(2019) 
\bibitem{KLMP} Koo, N., Lee, K., Morales, C.A.: Pointwise Topological Stability. Proceedings of the Edinburgh Mathematical Society. 61(4), 1179-1191(2018) 
\bibitem{KPSP} Kawaguchi, N.: Properties of shadowable points: Chaos and equicontinuity. Bulletin of the Brazilian Mathematical Society, New Series. 48(4), 599-622(2017)
\bibitem{KQS} Kawaguchi, N.: Quantitative shadowable points. Dynamical Systems. 32(4), 504-518(2017)
\bibitem{LA} Li, R.: A note on uniform convergence and transitivity. Chaos, Solitons \& Fractals. 45(6),759-764(2012)
\bibitem{MI} Moothathu, T.K.S.: Implications of Pseudo-orbit Tracing Property for Continuous Maps on Compacta. Topology Applications. 158, 2232-2239(2011) 
\bibitem{MSP} Morales, C.A.: Shadowable points. Dynamical Systems. 31(3),347-56(2016)
\bibitem{RP} Reddy, W.L.: Pointwise expansion homeomorphisms. J. London Math. Soc. 2, 232--236 (1970)
\bibitem{SU} Sharma, P.: Uniform Convergence and dynamical behavior of a discrete dynamical system. Journal of Applied Mathematics and Physics. 3(07), 766-770(2015)  
\bibitem{UU} Utz, W.R.: Unstable homeomorphisms. Proc. Amer. Math. Soc. 1, 769-774(1950)
\bibitem{WO} Walters, P.: On the pseudo orbit tracing property and its relationship to stability. In the structure of attractors in dynamical systems. Springer, Berlin, Heidelberg. 231--244(1978)
\bibitem{YZZD} Yan, K., Zeng, F., Zhang, G.: Devaney’s chaos on uniform limit maps. Chaos, Solitons and Fractals. 44, 522-525(2011) 
\bibitem{XZ} Ye, X., Zhang, G.: Entropy points and applications. Trans. Amer. Math. Soc. 359, 6167--6186 (2007)  
\end{thebibliography}
\end{document}